\def\R{\mathbb{R}}
\renewcommand{\phi}{\varphi}
\newcommand{\eps}{\varepsilon}
\newtheorem{dfn}{Definition}
\newtheorem{theorem}{Theorem}[section]
\newtheorem{proposition}[theorem]{Proposition}
\newtheorem{assumption}[theorem]{Assumption}
\newtheorem{remark}[theorem]{Remark}
\definecolor{electricindigo}{rgb}{0.44, 0.0, 1.0}
\definecolor{ProfRed}{rgb}{0.7 0.1 0.1}
\definecolor{RegGreen}{rgb}{0,0.4,0}
\definecolor{ConfGreen}{rgb}{0,1,0}
\definecolor{SimBlue}{rgb}{0,0.6,0.6}
\newcommand{\req}[1]{(\ref{#1})}
\newcommand{\dup}[2]{\langle{#1},{#2}\rangle_{X^*,X}}
\begin{document}
\title[Integration based profile likelihood calculation for PDE constrained problems]{Integration based profile likelihood calculation for PDE constrained parameter estimation problems}

\author{R Boiger$^1$, J Hasenauer$^{2,3}$, S Hro\ss$^{2,3}$ and B Kaltenbacher$^1$}

\address{$^1$ Alpen-Adria-Universit\"at Klagenfurt, Universit\"atstra{\ss}e 65-67, A-9020 Klagenfurt, Austria}
\address{$^2$ Helmholtz Zentrum M\"unchen --  German Research Center for Environmental Health, Institute of Computational Biology, Ingoldst\"adter Landstra{\ss}e 1, D-85764 Neuherberg, Germany}
\address{$^3$ Technische Universit\"at M\"unchen, Department of Mathematics, Boltzmannstra{\ss}e 3, D-85748 Garching, Germany}

\ead{\mailto{Romana.Boiger@aau.at}, \mailto{Jan.Hasenauer@helmholtz-muenchen.de}, \mailto{Sabrina.Hross@helmholtz-muenchen.de}, \mailto{Barbara.Kaltenbacher@aau.at}}
\begin{abstract}

Partial differential equation (PDE) models are widely used in engineering and natural sciences to describe spatio-temporal processes. The parameters of the considered processes are often unknown and have to be estimated from experimental data. Due to partial observations and measurement noise, these parameter estimates are subject to uncertainty. This uncertainty can be assessed using profile likelihoods, a reliable but computationally intensive approach. In this paper, we introduce an integration based approach for the profile likelihood calculation for inverse problems with PDE constraints. While existing approaches rely on repeated optimization, the proposed approach exploits a dynamical system evolving along the likelihood profile. We derive the dynamical system for the reduced and the full estimation problem and study its properties. To evaluate the proposed method, we compare it with state-of-the-art algorithms for a simple reaction-diffusion model for a cellular patterning process. We observe a good accuracy of the method as well as a significant speed up as compared to established methods. Integration based profile calculation facilitates rigorous uncertainty analysis for computationally demanding parameter estimation problems with PDE constraints.

\end{abstract}

\maketitle

\section{Introduction}
\label{sec:Intro}

Engineering, physics, biology and adjacent fields employ PDE models for the mathematical description of involved processes. These models often contain unknown parameters which have to be inferred from experimental data. The corresponding parameter estimation problems are potentially ill-posed due to limited and noise-corrupted experimental data \cite{Hadamard1902}. Due to the ill-posedness a comprehensive uncertainty analysis is crucial.
In particular, we refer to \cite{EngletalSysbio} for an overview on inverse problems in systems biology with an emphasis on regularization aspects.
Since we here deal with finite dimensional parameter spaces, regularization (see, e.g., \cite{EHNBuch}) is not required. Still we face the difficulty that some parameters might not be uniquely determined from the given noisy measurements and due to the strong nonlinearity of the problem, this indeterminacy might not be detectable by just considering the nullspace of the linearized forward operator. Thus we here rely on the concept of practical identifiability and profile likelihoods that fully account for nonlinearity. So far, the literature on profile likelihoods appears to mainly concentrate on (finite dimensional) statistics, as well as applications in systems biology, geography and econometrics. To the best of our knowledge, their use in inverse problems involving models in infinite dimensional spaces, especially to parameter identification problems in PDEs, has not been investigated yet.

In a statistical framework, parameter and prediction uncertainties can be quantified in terms of confidence and credible intervals. Confidence and credible intervals capture the range of plausible parameter and model predictions in accordance with a predefined statistical measure, e.g., the likelihood ratio. For the construction of confidence and credible intervals, local approximations \cite{Meeker1995,MurphyVaa2000}, bootstrapping \cite{Joshi2006}, Bayesian methods \cite{Wilkinson2007} and profile likelihoods \cite{MurphyVaa2000} are employed. Local approximation such as the Wald approximation \cite{Meeker1995} and the Fisher Information Matrix (FIM) based approximation \cite{MurphyVaa2000} are computationally efficient but merely provide rough estimates of confidence intervals. Bootstrapping provides non-local estimates but should only be applied to models without practical non-identifiablities \cite{FroehlichThe2014}. Bayesian methods and profile likelihoods appear to be most reliable and consistent \cite{RaueKre2013,HugRau2013,HrossHas2016}.

Bayesian methods construct representative samples from the posterior distribution, thereby assessing the uncertainty of all parameters and model predictions simultaneously
\cite{KaipioSomersalo}. Profile likelihood methods explore the uncertainty of individual parameters and model predictions using repeated local optimizations. The credible intervals computed using Bayesian methods employ marginalization, while confidence intervals computed using profile likelihoods rely on maximum projections. Raue et al. \cite{RaueKre2013} demonstrated the latter can be advantageous as the coverage of regions with high likelihood values is ensured. In addition, the calculation of profile likelihoods tends to be computationally more tractable than the sampling of the posterior distribution \cite{RaueKre2013,HugRau2013,HrossHas2016}. This also holds if sophisticated sampling procedures \cite{HaarioLai2006,GirolamiCal2011,RigatMir2012} are used. Nevertheless, for computationally demanding problems, also the application of classical profile likelihood methods is prohibitive \cite{HrossHas2016,HockHas2013,Lockley2015}.


To improve the computational efficiency of profile likelihood calculations, Chen and Jennrich \cite{ChenJen2002} proposed an integration based approach. This approach relies on a differential algebraic equation (DAE) which evolves along the profile likelihood. The trajectories of this systems provide the parameter profile without the need for repeated optimization. Mass matrix and vector field of the DAE are computed from the gradient and hessian of the objective function. Chen and Jennrich \cite{ChenJen2002} obtained promising results for simple likelihood functions. In the last years also the application to ordinary differential equation (ODE) models has been discussed \cite{KreutzRau2013}. 

In this paper, we will generalize integration based profile likelihood calculation to PDE constrained parameter estimation problems. We will introduce a reduced and a full formulation for a statistically motivated objective function and discuss their properties. As the calculation of the Hessian is potentially computationally intensive, approximation will be considered and combined with a retraction term. The different approaches will be illustrated and evaluated using an example from systems biology.

The remainder of this paper is organized as follows: In Section~\ref{sec:Mathematical model} we will introduce the considered class of mathematical models and observation operator. The parameter estimation problem and uncertainty analysis using profile likelihoods will be outlined in Section~\ref{sec:Parameter optimization and uncertainty analysis}. In Section~\ref{sec:Profile likelihood calculation for the reduced problem} and~\ref{sec:Profile likelihood calculation for the full problem} the integration based profile likelihood calculation for the reduced and the full problem are presented. The relation of these two approaches is discussed in Section~\ref{sec:Comparison of full and reduced formulation of integration based profile likelihood calculation}. The proposed integration based profile likelihood calculation for PDE models is evaluated in Section~\ref{sec:Numerical evaluation of integration based profile likelihood calculation} for a model of gradient formation in fission yeast. The paper concludes with a discussion of the results and an outlook in Section~\ref{sec:Conclusion}.

\section{Mathematical model}
\label{sec:Mathematical model}

We consider parameter estimation in partial differential equation models
\begin{equation}\label{eq_Model}
\eqalign{
&u_t + C(\theta,u) = f(\theta)   \mbox{ in } ]0,T[\cr
&u(0) = u_0,}
\end{equation}
with state variable $u \in V$, defined over a spatial domain, and parameter vector $\theta \in \Theta \subseteq \mathbb{R}^{n}$. The operator $C(\theta,.):V\rightarrow V^\ast$, mapping from a separable, reflexive Banach space $V$ into its dual $V^\ast$, is equipped with appropriate boundary conditions, where $V \subset H \cong H^\ast \subset V^\ast$ is a Gelfand triple such that $V$ is imbedded continuously and densely into a Hilbert space $H$. To guarantee the existence of a weak solution $u\in W(0,T)=L^2(0,T;V)\cap H^1(0,T;V^\ast)$ of~(\ref{eq_Model}), according to (\cite{Zeidler90}, p. 770 ff.), we assume that the operator $C$ meets the following assumption:
\begin{assumption}[Existence of a weak solution] \label{Ass:weak_solution}~
\begin{itemize}
\item $u_0 \in H$ and $f(\theta)\in L^2([0,T]; V^\ast)$ are given. 
\item $C(\theta,.)$ is monotone and hemicontinuous. 
\item $C(\theta,.)$ is coercive, i.e. there exist $c_0$ and $c_1$ such that $\left\langle C(\theta,u),u\right\rangle_{V^\ast,V} \geq c_0\|u\|^2_V-c_1$.
\item $C(\theta,.)$ satisfies the growth condition, i.e. there exists a nonnegative function $c_2\in L^2(0,T)$ and a constant $c_3>0$, such that
$\|C(\theta,u)(t)\|_{V^\ast}\leq c_2(t)+c_3\|u(t)\|_V$ for all $u \in V$ and $t\in ]0,T[$.
\item The function $t\mapsto \left\langle C(\theta,u)(t),v\right\rangle_{V^\ast,V}$ is measurable on $]0,T[$ for all $u,v\in V$.
\end{itemize} 
\end{assumption}
Assumption~\ref{Ass:weak_solution} holds for models from a broad range of applications \cite{Troeltzsch2010} and ensures the existence of a parameter-to-state map 
\[\eqalign{
S:\R^{n_\theta}\to W(0,T), \mbox{ with } 
u=S(\theta) \mbox{ solving }(\ref{eq_Model})}.
\]
As the measurement of $u$ can be limited by experimental technologies, we consider potentially partial observations,
\begin{equation}
y = Q(\theta,u).
\label{eq:noise}
\end{equation}
The observation operator $Q(\theta,.):W(0,T)\rightarrow \R^{K}$ maps $u$ onto the observation $y \in \R^{K}$. The observation $y$ is a collection of different scalar observables measured at different time points. The index $k$ enumerates all the combinations of observables and time points, $y_k = (Q(\theta,u))_{k}=Q_{k}(\theta,u)$ for $k = 1,\ldots, K$.

In practice the observations are corrupted by measurement noise. The noise corrupted measurement of the observable $y_k$ is denoted by $\overline{y}_k$. For additive, normally distributed measurement noise it holds that
\begin{equation}
\overline{y}_k = y_k + \eps_k \mbox{ with } \eps_k \sim \mathcal{N}(0,\sigma_k^2(\theta)).
\label{eq:noise_1}
\end{equation}
The parameters of the noise model, here the variance $\sigma_k^2(\theta)$, are potentially unknown and can depend on the parameter.

\section{Parameter estimation problem and uncertainty analysis}
\label{sec:Parameter optimization and uncertainty analysis}

We estimate the parameters using a likelihood-based approach. The likelihood is the conditional probability of observing the measured data $\overline{y}_k$, $k = 1,\ldots,K$, given the parameter vector $\theta$. The likelihood of observing the measured data depends implicitly on the noise model and is e.g. for additive, normally distributed measurement noise given by
\[\mathrm{L}(\theta)=\prod_{k=1}^{K} \frac{1}{\sqrt{2\pi}\sigma_{k}(\theta)}\exp\left(-\frac{1}{2} \left(\frac{\overline{y}_{k}-Q_{k}(\theta,S(\theta))}{\sigma_{k}(\theta)}\right)^2\right).\]

\begin{remark}
The methods and results we will present do not assume a particular noise model or likelihood function. We merely assume that the likelihood function is twice continuously differentiable.
\end{remark}

The parameter vector $\hat\theta$ which maximizes $\mathrm{L}(\theta)$ is the maximum likelihood estimate. To improve the numerical evaluation and the optimizer convergence, the maximum likelihood estimate is usually determined by minimizing the negative log-likelihood function, 
\begin{equation} \label{eq: reduced objective function}
\mathbf{j}(\theta) = - \log \mathrm{L}(\theta) = \frac{1}{2} \sum^{K}_{k=1} \left(\log\left(2\pi \sigma^2_{k}(\theta)\right) + \left(\frac{\bar{y}_{k}-Q_{k}(\theta,S(\theta))}{\sigma_{k}(\theta)}\right)^2\right),
\end{equation}
with its non-reduced counterpart
\begin{equation*}
\mathbf{J}(\theta,u)= \frac{1}{2} \sum^{K}_{k=1} \left(\log\left(2\pi \sigma^2_{k}(\theta)\right) + \left(\frac{\bar{y}_{k}-Q_{k}(\theta,u)}{\sigma_{k}(\theta)}\right)^2\right).
\end{equation*}
This yields the PDE constrained optimization problem 
\begin{equation}
\eqalign{
& \min_{\theta \in\Theta, u\in W(0,T)}  \mathbf{J}(\theta,u) \cr
&\mbox{s.t. }u_t + C(\theta,u) = f(\theta)   \mbox{ in } ]0,T[\cr
&\hspace{2.1cm} u(0) = u_0.
\label{min_reduced}}
\end{equation}
The maximum likelihood estimate $\hat\theta$, i.e. the optimum of~(\ref{min_reduced}), is potentially non-unique and might strongly depend on the measurement noise. 
To assess the parameters and prediction uncertainties we consider confidence regions and confidence intervals.
\begin{dfn}[Confidence region]~\newline
For the parameter vector $\theta\in\Theta$ we define the confidence region to the confidence level $\alpha$ as
\[\eqalign{
\mathrm{CR}_{\alpha} 
&=  \left\lbrace\theta\in\Theta\left|\frac{\mathrm{L}(\theta)}{\mathrm{L}(\hat{\theta})}\geq\exp\left(-\frac{\Delta_{\alpha}}{2}\right)\right.\right\rbrace, \cr
&=  \left\lbrace\theta\in\Theta\left|2\left(\mathbf{j}(\theta)-\mathbf{j}(\hat{\theta})\right) \leq \Delta_{\alpha}\right.\right\rbrace,
}\]
with $\Delta_{\alpha}$ denoting the $\alpha$th-percentile of the $\chi^2$ distribution with one degree of freedom. 
\end{dfn}
From the confidence regions, the confidence intervals for individual model properties $\mathbf{G}(\theta,u)$, with $\mathbf{G}: \Theta \times V \mapsto \mathbb{R}$, can be derived. The reduced form of $\mathbf{G}(\theta,u)$ is denoted by $\mathbf{g}(\theta) = \mathbf{G}(\theta,S(\theta))$, with $\mathbf{g}: \Theta \mapsto \mathbb{R}$. Model properties are for instance individual parameters, functions of parameters or properties of the solution of the model.
\begin{dfn}[Confidence interval]~\newline
The confidence interval for a model property is the projection of $\mathrm{CR}_\alpha$ onto $\mathbf{g}(\theta)$,
\begin{equation}
\mathrm{CI}_{\alpha,\mathbf{g}(\theta)} = P_{\mathbf{g}(\theta)} \mathrm{CR}_{\alpha} = \left\lbrace c\left|\exists \theta \in \mathrm{CR}_{\alpha} \wedge \mathbf{g}(\theta) = c\right.\right\rbrace
\label{eq:conf interval}
\end{equation}
\end{dfn}
The evaluation of the confidence region requires the calculation of level sets of likelihood functions. For problems with $n_\theta \gg 1$ this is non-trivial. To determine confidence intervals without calculating confidence regions, profile likelihoods \cite{MurphyVaa2000} can be used. The profile likelihood for a scalar function $\mathbf{g}(\theta)$, $\mathrm{PL}_{\mathbf{g}(\theta)}(c)$, is the maximal likelihood value for $\mathbf{g}(\theta) = c$ \cite{ChenJen1996}.
\begin{dfn}[Profile likelihood]~\newline
For the scalar function $\mathbf{g}(\theta)$ we define the profile likelihood as
\begin{equation}
\mathrm{PL}_{\mathbf{g}(\theta)}(c) =\max_{\theta\in\Theta} \mathrm{L}(\theta) \mbox{ subject to } \mathbf{g}(\theta) = c.
\label{eq:Profile}
\end{equation}
For values $c$ outside the range of $\mathbf{g}(\theta)$, $\mathrm{PL}_{\mathbf{g}(\theta)}(c) = 0$.
\end{dfn}
In other words the profile likelihood provides the maximum projection of the likelihood along $\mathbf{g}(\theta)$. Accordingly, the confidence interval for $\mathbf{g}(\theta)$ follows from (\ref{eq:conf interval}) as
\[\mathrm{CI}_{\alpha,\mathbf{g}(\theta)} =  \left\lbrace c\left|\frac{\mathrm{PL}_{\mathbf{g}(\theta)}(c)}{\mathrm{L}(\hat{\theta})} \geq\exp\left(-\frac{\Delta_{\alpha}}{2}\right)\right.\right\rbrace.\]
The relation among likelihood function, confidence region/intervals and profile likelihoods is illustrated in Figure~\ref{fig:Unc_schematic}. Note that the confidence intervals for the individual parameters are obtained by the projection of the confidence region as well as by thresholding the profile likelihood. For $\mathbf{g}(\theta) = \theta_j$ this provides the confidence interval of parameter $\theta_j$, while for other choices of $\mathbf{g}(\theta)$ more involved parameter dependent model properties can be assessed, e.g. the product of two parameters.

The confidence intervals to a confidence level $\alpha$ can be bounded or unbounded:
\begin{dfn}[Practical identifiability]~\newline
A model property $\mathbf{g}(\theta)$ is called practically identifiable if its confidence interval $\mathrm{CI}_{\alpha,\mathbf{g}(\theta)}$ is bounded; otherwise it is called practically non-identifiable.
\end{dfn}

\begin{figure}
\centering
\includegraphics[width=\textwidth]{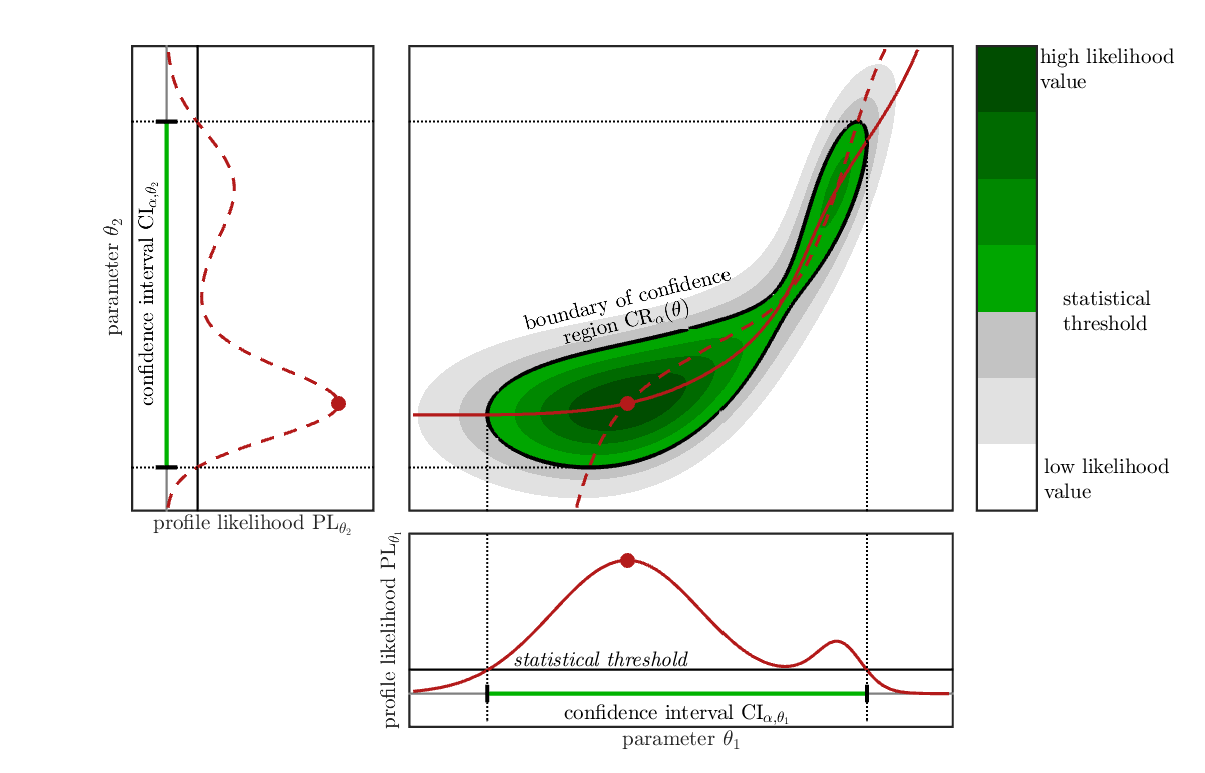}
\caption{\textbf{Illustration of confidence region, confidence intervals, profile likelihoods and their relation.} (big panel) Likelihood function landscape (shading), confidence region (\textcolor{RegGreen}{\rule{0.25cm}{7pt}}) and profile likelihood path $\theta_c$ ($\theta_1$:\textcolor{ProfRed}{\rule[.8mm]{0.25cm}{2pt}};$\theta_2$:\textcolor{ProfRed}{\rule[.8mm]{0.08cm}{2pt}}\rule[.8mm]{0.08cm}{0pt}\textcolor{ProfRed}{\rule[.8mm]{0.08cm}{2pt}}). (small panels) Profile likelihood ratio (\textcolor{ProfRed}{\rule[.8mm]{0.25cm}{2pt}}) and confidence interval (\textcolor{ConfGreen}{\rule[.8mm]{0.25cm}{2pt}}) for $\theta_1$ and $\theta_2$. The relation of different quantities is indicated using dotted lines. The significance threshold is indicated in all three figures as solid black line.}
\label{fig:Unc_schematic}
\end{figure}


\section{Profile likelihood calculation for the reduced problem}
\label{sec:Profile likelihood calculation for the reduced problem}

In this section, we introduce optimization and integration based profile likelihood calculation for the reduced form of PDE constrained optimization problems. The formulation of the integration based  profile likelihood calculation method is adapted from the results of Chen and Jennrich \cite{ChenJen2002}. In particular we establish its validity for function spaces. For simplicity of exposition we consider the case without constraints on the parameters, i.e. $\theta \in\Theta = \R^n$.

Since our analysis will rely on differentiation of the first order necessary optimality conditions, we will make the following assumptions on smoothness of the involved functions

\begin{assumption}\label{Ass:diff}
$C\colon \R^{n_\phi}\times V \rightarrow V^\ast$, $f\colon\R^{n_\phi}\rightarrow V^\ast$ and $J\colon \R^{n_\theta}\times W(0,T)\rightarrow \R$ are twice continuously differentiable.
\end{assumption}

\begin{figure}
\centering
\includegraphics[width=\textwidth]{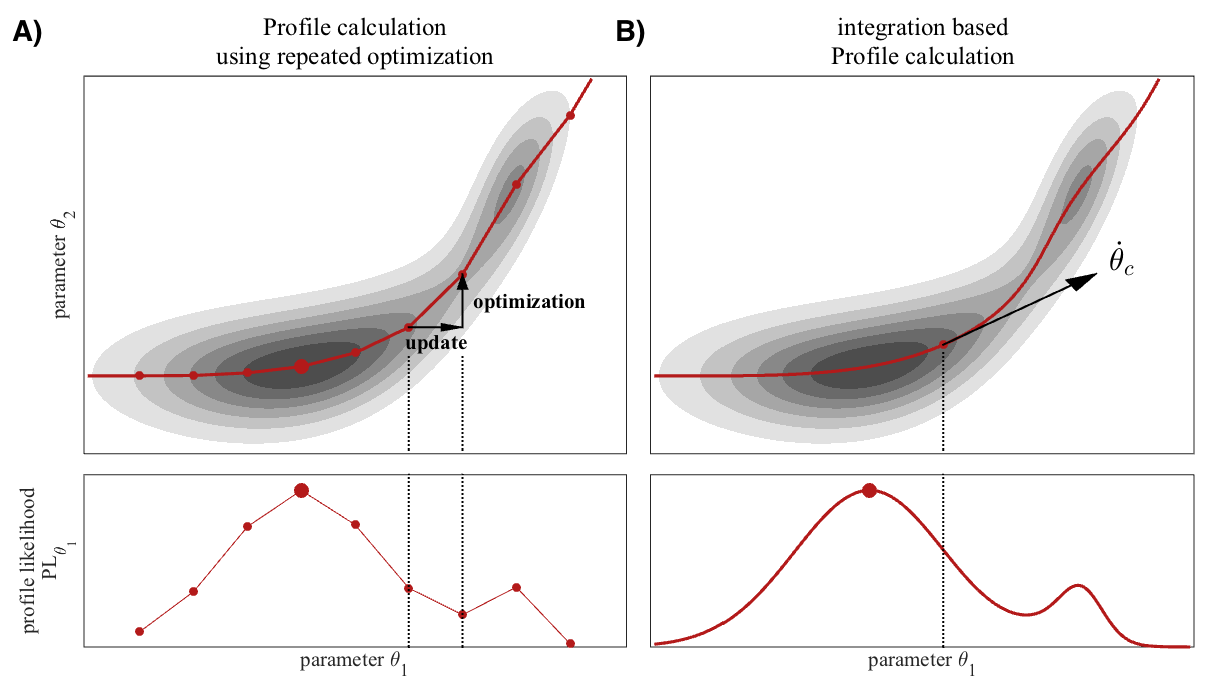}
\caption{\textbf{Illustration of optimization based profile likelihood calculation (upper panels) and integration based profile likelihood calculation (lower panels).} (upper panel in A) Optimization based profile likelihood calculation for likelihood function (shading) using update and re-optimization step (arrows). (upper panel in B) Evaluation points (\textcolor{ProfRed}{$\bullet$}) and approximation of the profile likelihood (\textcolor{ProfRed}{\rule[.8mm]{0.25cm}{2pt}}). (lower panel in A) Integration based profile likelihood calculation for likelihood function (shading) using continuous system with derivative (arrow) tangential to the parameter trajectory (\textcolor{ProfRed}{\rule[.8mm]{0.25cm}{2pt}}). (lower panel in B) Profile likelihood (\textcolor{ProfRed}{\rule[.8mm]{0.25cm}{2pt}}) obtained using integration based method.
}
\label{fig:Prof_schematic}
\end{figure}

\subsection{Optimization based profile likelihood calculation}
\label{sec: Optimization based profile likelihood calculation}

Optimization based methods approximate the profile likelihood $\mathrm{PL}_{\mathbf{g}(\theta)}(c)$ by evaluating it on a grid $\{c_l\}_l$ (Figure~\ref{fig:Prof_schematic} upper panels). For each point $c$ the reduced negative log-likelihood function is minimized,
\begin{equation}
\min_{\theta\in\Theta} \mathbf{j}(\theta) \mbox{ subject to } \mathbf{g}(\theta) = c.
\label{eq:PL}
\end{equation}
This minimization yields the optimal parameter vector, $\theta_c := \hat\theta(c)$, and the corresponding value of the negative log-likelihood function, $\mathbf{j}(\theta_c)$. It holds that $\mathrm{PL}_{\mathbf{g}(\theta)}(c) =\exp(-\mathbf{j}(\theta_c))$.

State-of-the-art methods construct the grid $\{c_l\}_l$ iteratively, starting at the optimal parameter vector $\hat\theta$ of~(\ref{min_reduced}) with $\hat c = \mathbf{g}(\hat\theta)$ \cite{Raue2009}. An iteration consists of two steps: (i) the update of the constraint $c_l$ using an adaptive approach; and (ii) the local optimization of the parameters. The adaptation controls the change in the objective function, $\mathbf{j}(\theta_{c_{l-1}})$ to $\mathbf{j}(\theta_{c_l})$, and the number of iterations. The starting point $\theta_{c_l}^{(0)}$ of the local optimization for $c_l$ is constructed from previous points. Most implementations use as starting point
\begin{enumerate}
\item \textbf{0th order proposal:} the optimal point for $c_{l-1}$, $\theta_{c_l}^{(0)} = \theta_{c_{l-1}}$, or
\item \textbf{1st order proposal:} the linear extrapolation based on the optimal points for $c_{l-1}$ and $c_{l-2}$,
$$\theta_{c_l}^{(0)} = \theta_{c_{l-1}} + \frac{c_{l}-c_{l-1}}{c_{l-1}-c_{l-2}} (\theta_{c_{l-1}} - \theta_{c_{l-2}}).$$
\end{enumerate}
The 0th order proposal is illustrated in Figure~\ref{fig:Prof_schematic} A (upper panel). In practice the 1st order proposal, which uses additional topological information, yields starting points which are closer to the optimum $\theta_c$. Accordingly, this approach tends to be computationally more efficient.

Optimization-based profile likelihood calculation is computationally efficient compared to other uncertainty analysis methods~\cite{RaueKre2013,HugRau2013,HrossHas2016}. It, however, becomes computationally demanding if the number of necessary iterations increases or an individual local optimization is computationally expensive. The number of iterations is influenced by the structure of the objective function landscape, e.g., non-identifiabilities. The computational complexity of the local optimization is determined by the computation time of the forward problems (and its derivatives). Both are issues for a range of practical applications including PDE constrained problems~\cite{HockHas2013}. 

\subsection{Integration based profile likelihood calculation}
\label{sec: Integration based profile likelihood calculation - reduced}

Integration based profile likelihood calculation addresses the drawbacks of optimization based methods by exploiting the differential geometry of the reduced optimization problem~\cite{ChenJen2002}. This is achieved by considering the Lagrange function for~(\ref{eq:PL}),
\begin{equation*}
\ell(\theta) = \mathbf{j}(\theta) + \lambda (\mathbf{g}(\theta) - c),
\end{equation*}
in which $\lambda \in \R$ denotes the Lagrange multiplier. From the Lagrange function the first order optimality conditions,
\begin{equation}
\eqalign{
\nabla_{\theta} \mathbf{j}(\theta) + \lambda\nabla_{\theta}\mathbf{g}(\theta) = 0\\
\mathbf{g}(\theta) = c,}
\label{eq:PL_diff_red}
\end{equation}
can be derived. This system of equations describes the dependence of the minimizing parameter vector $\theta$ and the Lagrange multiplier $\lambda$ on $c$. Therefore we use the notation $\theta_c := \theta(c)$ and $\lambda_c := \lambda(c)$. The differentiation of~(\ref{eq:PL_diff_red}) with respect to $c$ yields an evolution equation for the pair $(\theta_c,\lambda_c)$, 
\begin{equation}
\underbrace{\left(\begin{array}{cc}
\nabla_{\theta}^2 \mathbf{j}(\theta_c) + \lambda_c\nabla_{\theta}^2 \mathbf{g}(\theta_c) & \nabla_{\theta} \mathbf{g}(\theta_c)\cr
\nabla_{\theta} \mathbf{g}(\theta_c)^T & 0\end{array}\right)}_{:= M_\mathrm{red}(\theta_c)} \left(\begin{array}{c}\dot{\theta}_c\cr \dot{\lambda}_c\end{array}\right)=\left(\begin{array}{c}0\cr 1\end{array}\right),
\label{eq:PL_diff_red2}
\end{equation}
where $\dot{\theta}_c$ and $\dot{\lambda}_c$ are derivatives with respect to $c$. The solution of the differential algebraic equation (DAE)~(\ref{eq:PL_diff_red2}) for a starting point which solves~(\ref{eq:PL}) for $c = c_0$ yields the profile $\theta_c$ for $c\in[c_0,c_\mathrm{end}]$.

\begin{proposition}
Let $\mathbf{j}:\R^n\to\R$ and $g:\R^n\to\R$ be twice continuously differentiable and let $({\theta}_c,\lambda_c)_{c\in[c_0,c_1]}$ be a solution of (\ref{eq:PL_diff_red2}) with initial data $(\theta_{c_0},\lambda_{c_0})$ solving (\ref{eq:PL_diff_red}) for $c=c_0$.

Then for all $c\in[c_0,c_1]$, $(\theta_c,\lambda_c)$ solves the optimality conditions (\ref{eq:PL_diff_red}).
\end{proposition}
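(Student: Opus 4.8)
The plan is to show that the quantity $F(c) := \bigl(\nabla_\theta \mathbf{j}(\theta_c) + \lambda_c \nabla_\theta \mathbf{g}(\theta_c),\, \mathbf{g}(\theta_c) - c\bigr)$ vanishes identically on $[c_0,c_1]$, given that it vanishes at $c=c_0$ by hypothesis. The natural strategy is a uniqueness-of-the-zero argument: differentiate $F$ along the trajectory, observe that the DAE $(\ref{eq:PL_diff_red2})$ was constructed precisely so that $\dot F(c) = 0$, and conclude $F(c) \equiv F(c_0) = 0$. So first I would write out $\dot F(c)$ by the chain rule, using that $(\theta_c,\lambda_c)$ depends on $c$ and that $\mathbf{j},\mathbf{g}$ are twice continuously differentiable (so the first-order optimality expression is continuously differentiable in $\theta$, and composition with the $C^1$ trajectory is legitimate).

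Carrying this out, the top block of $\dot F$ is
\[
\frac{d}{dc}\bigl(\nabla_\theta \mathbf{j}(\theta_c) + \lambda_c \nabla_\theta \mathbf{g}(\theta_c)\bigr)
= \bigl(\nabla_\theta^2 \mathbf{j}(\theta_c) + \lambda_c \nabla_\theta^2 \mathbf{g}(\theta_c)\bigr)\dot\theta_c + \nabla_\theta \mathbf{g}(\theta_c)\,\dot\lambda_c,
\]
and the bottom block is $\frac{d}{dc}(\mathbf{g}(\theta_c) - c) = \nabla_\theta \mathbf{g}(\theta_c)^T \dot\theta_c - 1$. Stacking these two and comparing with the left-hand side of $(\ref{eq:PL_diff_red2})$, I see that $\dot F(c)$ is exactly
\[
M_\mathrm{red}(\theta_c)\left(\begin{array}{c}\dot\theta_c\cr \dot\lambda_c\end{array}\right) - \left(\begin{array}{c}0\cr 1\end{array}\right),
\]
which is zero by definition of the DAE $(\ref{eq:PL_diff_red2})$. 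Hence $\dot F \equiv 0$ on $[c_0,c_1]$, and since $F(c_0)=0$ by the hypothesis that $(\theta_{c_0},\lambda_{c_0})$ solves $(\ref{eq:PL_diff_red})$ at $c=c_0$, integration gives $F(c)=0$ for all $c\in[c_0,c_1]$. Unwinding the definition of $F$, this is precisely the statement that $(\theta_c,\lambda_c)$ solves the optimality conditions $(\ref{eq:PL_diff_red})$ for every $c$ in the interval.

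The calculation itself is routine; the only point requiring care is the regularity needed to differentiate along the trajectory. A solution of the DAE $(\ref{eq:PL_diff_red2})$ supplies $\dot\theta_c$ and $\dot\lambda_c$, so $(\theta_c,\lambda_c)$ is by assumption (at least) continuously differentiable in $c$, and the twice-continuous-differentiability of $\mathbf{j}$ and $\mathbf{g}$ guarantees that $\nabla_\theta^2\mathbf{j}$, $\nabla_\theta^2\mathbf{g}$ and $\nabla_\theta\mathbf{g}$ are continuous, so the chain-rule expansion above is valid and $\dot F$ is continuous. I would therefore state explicitly at the outset that the DAE is read as defining $(\dot\theta_c,\dot\lambda_c)$ consistently with a $C^1$ curve. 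I expect the main (and essentially the only) subtlety to be this bookkeeping of which derivatives exist and are continuous — in particular making clear that no invertibility of $M_\mathrm{red}$ is needed for this direction of the argument, since we only substitute the DAE rather than solve it. With that in place the result follows immediately from the fundamental theorem of calculus applied componentwise to $F$.
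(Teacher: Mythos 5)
Your proof is correct and is essentially the paper's own argument: the paper defines the residual map $\Phi(\Psi(c))-(0,c)^T$ and observes that its vanishing on $[c_0,c_1]$ is equivalent to vanishing at $c_0$ together with the differentiated identity, which is exactly the DAE; your explicit computation of $\dot F$ and integration back via the fundamental theorem of calculus is the same reasoning spelled out in components.
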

\begin{proof}
For any fixed $c_1>c_0$ we define $\Psi:[c_0,c_1]\to\R^{n+1}$ by $\Psi(c)=(\theta_c,\lambda_c)$ 
and $\Phi:\R^{n+1}\to\R^{n+1}$ by $\Phi(\theta,\lambda)=
\left(\nabla_{\theta} \mathbf{j}(\theta) + \lambda\nabla_{\theta} \mathbf{g}(\theta),
\mathbf{g}(\theta)\right)^T$,
so that we can rewrite (\ref{eq:PL_diff_red}) for $c\in[c_0,c_1]$ as 
\begin{equation*}
\Phi(\Psi(c))-\left(\begin{array}{c}0\cr c\end{array}\right)=0 \quad \forall c\in[c_0,c_1]\,.
\end{equation*}
Under the differentiability assumptions made here this is equivalent to 
\begin{equation*}
\hspace{-2cm}
\Phi(\Psi(c_0))-\left(\begin{array}{c}0\cr c_0\end{array}\right)=0\ \mbox{ and } \
\frac{d \Phi }{d (\theta,\lambda)}\left(\Psi(c)\right)\dot\Psi(c)-\left(\begin{array}{c}0\cr 1\end{array}\right)=0 \quad \forall c\in[c_0,c_1]\,,
\end{equation*}
i.e., (\ref{eq:PL_diff_red2}).
\end{proof}
The trajectory $\theta_c$ of~(\ref{eq:PL_diff_red2}) is the path in parameter space along which the minimum of the constrained optimization problem~(\ref{eq:PL}) is attained. The evaluation of the objective function $\mathbf{j}(\theta_c)$ along this trajectory yields the profile likelihood $\mathrm{PL}_{\mathbf{g}(\theta)}(c) =\exp(-\mathbf{j}(\theta_c))$. Accordingly, the profile likelihood can be computed without optimization. Instead, the update directions are determined by the derivatives of $\mathbf{g}(\theta)$ and $\mathbf{j}(\theta)$.

The numerical integration of~(\ref{eq:PL_diff_red2}) relies on the evaluation of the matrix-vector product $M_\mathrm{red}(\theta_c) (\dot\theta_c,\dot\lambda_c)^T$. This matrix-vector product contains the terms $(\nabla_\theta \mathbf{j}(\theta_c)  + \lambda_c\nabla_\theta^2 \mathbf{g}(\theta_c)) \dot\theta_c$, $\nabla_\theta \mathbf{g}(\theta_c) \dot\lambda_c$ and $\nabla_\theta g^T(\theta_c) \dot\theta_c$. As $\mathbf{g}(\theta)$ and $\mathbf{j}(\theta)$ are functions of the PDE solution, their derivatives depend on the parameter-to-state mapping $S(\theta)$. These derivatives with respect to $\theta$ are explicitly given as 
\begin{equation}
\eqalign{
\fl \mathbf{g}_{\theta_i}(\theta_c)&=\mathbf{G}_{\theta_i}(\theta_c, S(\theta_c))+\mathbf{G}_{u}(\theta_c,S(\theta_c))S_{\theta_j}(\theta_c), \\
\fl \mathbf{g}_{\theta_i \theta_j}(\theta_c)&=\mathbf{G}_{\theta_i \theta_j}(\theta_c, S(\theta_c))+\mathbf{G}_{\theta_i u}(\theta_c,S(\theta_c))S_{\theta_j}(\theta_c)+\mathbf{G}_{u \theta_j}(\theta_c,S(\theta_c))S_{\theta_i}(\theta_c) \\
\fl&+\mathbf{G}_{u u}(\theta_c,S(\theta_c))S_{\theta_i}(\theta_c)S_{\theta_j}(\theta_c)+ \mathbf{G}_{u}(\theta_c,S(\theta_c))S_{\theta_i \theta_j}(\theta_c),
}
\end{equation}
and
\begin{equation} \label{second_deriv_reduced}
\eqalign{
\fl \mathbf{j}_{\theta_i \theta_j}(\theta_c)&=\mathbf{J}_{\theta_i \theta_j}(\theta_c, S(\theta_c))+\mathbf{J}_{\theta_i u}(\theta_c,S(\theta_c))S_{\theta_j}(\theta_c)+\mathbf{J}_{u \theta_j}(\theta_c,S(\theta_c))S_{\theta_i}(\theta_c) \\
\fl&+\mathbf{J}_{u u}(\theta_c,S(\theta_c))S_{\theta_i}(\theta_c)S_{\theta_j}(\theta_c)+ \mathbf{J}_{u}(\theta_c,S(\theta_c))S_{\theta_i \theta_j}(\theta_c).
}
\end{equation}
Here $\mathbf{G}$ and $\mathbf{J}$ denote the unreduced form of the model property and the objective function. The sensitivities of the parameter-to-state map $S(\theta)$ can be calculated using forward sensitivity equations derived by differentiating~(\ref{eq_Model}) for $u = S(\theta_c)$. This differentiation yields the first and second order derivatives, $e=S_{\theta_i}(\theta)$ and $z=S_{\theta_i\theta_j}(\theta)$, for $i,j=1,\ldots,n$, as solutions to,
\begin{equation}
\label{eq:1st order forward sensitivities}
\hspace{-2.5cm} \left\{
\begin{array}{l}
e_t+C_u(\theta,S(\theta)) e = f_{\theta_i}(\theta)-C_{\theta_i}(\theta,S(\theta))_=: -h^\theta_i\\
e(0)=0
\end{array}\right.
\end{equation}
and
\begin{equation}
\label{eq:2nd order forward sensitivities}
\hspace{-2.5cm} \left\{
\begin{array}{ll}
z_t+C_u(\theta,S(\theta)) z=&f_{\theta_i \theta_j}(\theta)-C_{\theta_i\theta_j}(\theta,S(\theta))-C_{\theta_i u}(\theta,S(\theta))S_{\theta_j}(\theta)\\
&-C_{u\theta_j}(\theta,S(\theta)) S_{\theta_i}(\theta)-C_{uu}(\theta,S(\theta)) S_{\theta_i}(\theta)S_{\theta_j}(\theta)=:-h^\theta_{ij}
\\
z(0)=0,
\end{array}\right.
\end{equation}
In addition to solving the nonlinear PDE~(\ref{eq_Model}), this requires the solutions of $n_\theta$ linear PDEs for the first order sensitivities and $n_\theta (n_\theta+1) / 2$ linear PDEs for the second order sensitivities. An alternative to forward sensitivities is the evaluation of the aforementioned Hessian and gradient vector product by adjoint methods. These enable the computation of the objective function gradient by solving just one linearized PDE and computation of the Hessian of the objective function by solving two additional linearized PDEs. Namely, defining $p$ as the solution of the adjoint equation
\begin{equation}
\left\{
\begin{array}{l}
p_t-C_u(\theta,S(\theta))^* p= \mathbf{J}_u(\theta,S(\theta))  \\
p(T)=0
\end{array}\right.
\label{eq:adjoint0}
\end{equation}
(see also (\ref{eq:adjoint}) below) and using the fact that $u=S(\theta)$ solves the PDE~(\ref{eq_Model}) followed by integration by parts, we obtain
\[
\hspace{-2.5cm}\eqalign{
\frac{\partial \mathbf{j}}{\partial\theta_i}(\theta)
= \frac{\partial }{\partial\theta_i} \Bigl(
\mathbf{J}(\theta,S(\theta))+\int^{T}_0{\left\langle S(\theta)_t+ C(\theta,S(\theta))-f(\theta),p\right\rangle_{V^\ast,V}}dt
\Bigr)\\
= \mathbf{J}_{\theta_i} (\theta,S(\theta))+ 
\mathbf{J}_u(\theta,S(\theta))S_{\theta_i}(\theta)\\
\quad+\int^{T}_0{\Bigl(\left\langle S_{\theta_i}(\theta)_t+ C_{\theta_i}(\theta,S(\theta))+ C_u(\theta,S(\theta))S_{\theta_i}(\theta)-f_{\theta_i}(\theta),p\right\rangle_{V^\ast,V}\Bigr)}dt\\
= \mathbf{J}_{\theta_i} (\theta,S(\theta))+
\int^{T}_0{\left\langle C_{\theta_i}(\theta,S(\theta))-f_{\theta_i}(\theta),p\right\rangle_{V^\ast,V}}dt\,.
}
\]
To calculate the Hessian-vector product $\nabla^2\mathbf{j}(\theta)\zeta$ for some $\zeta\in\R^{n_\theta}$,
we apply the same procedure to the auxiliary minimization problem $\min_{\theta\in\Theta} \nabla\mathbf{j}(\theta)^T\zeta$, which is then equivalent to 
\[
\eqalign{
\min_{\theta\in\Theta, (u,p)\in W(0,T)^2}  \tilde{\mathbf{J}}(\theta,(u,p))& \cr
\mbox{s.t. } u_t+C(\theta,u) = f(\theta)\qquad &u(0)=u_0\cr
\hspace*{0.75cm}p_t-C_u(\theta,u)^* p= \mathbf{J}_u(\theta,u)  \qquad &p(T)=0
}
\]
with 
\[
\tilde{\mathbf{J}}(\theta,(u,p))=\zeta^T\nabla_\theta\mathbf{J}(\theta,u)
+\int_0^T\langle \zeta^T\nabla_\theta C(\theta,u)-\zeta^T\nabla_\theta f(\theta),p\rangle_{V^\ast,V}dt\,.
\]
Defining $v,w$ as the solutions of 
\[
\hspace{-2.5cm}\eqalign{
v_t+C_u(\theta,S(\theta))v = -\zeta^T\nabla_\theta C(\theta,S(\theta))+\zeta^T\nabla_\theta f(\theta)    
\quad &v(0)=0\cr
w_t-C_u(\theta,S(\theta))^* w= (B(\theta,S(\theta),P(\theta))^*v+(\zeta^T\nabla_\theta C_u(\theta,S(\theta)))^*P(\theta)\\
\hspace*{4.2cm}+\zeta^T\nabla_\theta\mathbf{J}_u(\theta,S(\theta))+\mathbf{J}_{uu}(\theta,S(\theta))^*v 
\quad &w(T)=0}
\]
where we define $B$ by 
\[
\langle B(\theta,a,b)c,d\rangle_{V^\ast,V}=\langle C_{uu}(\theta,a)(c,d),b\rangle_{V^\ast,V} \mbox{ for all } \theta\in\Theta\,, \ a,b,c,d\in V
\]
and $P(\theta)=p$ as the solution to (\ref{eq:adjoint0}), we arrive at 
\[
\hspace{-2.5cm}\eqalign{
(\nabla_\theta^2 \mathbf{j}(\theta) \zeta)_i= \frac{\partial }{\partial\theta_i} \tilde{\mathbf{j}}(\theta)
= \frac{\partial }{\partial\theta_i} \tilde{\mathbf{J}}(\theta,(S(\theta),P(\theta)))\\
= \frac{\partial }{\partial\theta_i}\Bigl(
\zeta^T\nabla_\theta\mathbf{J}(\theta,S(\theta))+\int^{T}_0{\left\langle \zeta^T\nabla_\theta C(\theta,S(\theta))-\zeta^T\nabla_\theta f(\theta),P(\theta)\right\rangle_{V^\ast,V}}dt\\
\qquad\qquad + \int_0^T\langle S(\theta)_t+C(\theta,S(\theta))-f(\theta),w\rangle_{V^\ast,V}dt\\
\qquad\qquad + \int_0^T\langle -P(\theta)_t+C_u(\theta,S(\theta))^*P(\theta)+\mathbf{J}_u(\theta,S(\theta)),v\rangle_{V^\ast,V}dt
\Bigr)\\
=(\nabla_\theta^2 \mathbf{J}(\theta,S(\theta)) \zeta)_i + \mathbf{J}_{\theta_i u}(\theta,S(\theta))v
+ \int_0^T\Bigl(\langle C_{\theta_i}(\theta,S(\theta))-f_{\theta_i}(\theta),w\rangle_{V^\ast,V}\\
\qquad+\langle ((\nabla_\theta^2 C(\theta,S(\theta))-\nabla_\theta^2 f(\theta))\zeta)_i+C_{\theta_i u}(\theta,S(\theta))v,P(\theta)\rangle_{V^\ast,V}
\Bigr)dt\,.
}
\]

The numerical simulation of~(\ref{eq:PL_diff_red2}) with explicit or implicit time stepping can introduce numerical errors, which results in a divergence of the trajectory from the profile path and leads to an underestimation of the profile likelihood. This effect can be counterbalanced by the incorporation of a retraction term, which results in a minimization of $\mathbf{j}(\theta)$ for the given constraint, 
\begin{equation} \label{eq: reduced integration-based profile calculation, stabilized}
\left(\begin{array}{cc}
\nabla^2_\theta \mathbf{j}(\theta_c)  + \lambda_c\nabla_\theta^2 \mathbf{g}(\theta_c) & \nabla_{\theta} \mathbf{g}(\theta_c) \cr
\nabla_{\theta} \mathbf{g}(\theta_c)^T & 0\end{array}\right) \left(\begin{array}{c}\dot{\theta}_c\cr \dot{\lambda}_c\end{array}\right)=\left(\begin{array}{c}- \gamma \nabla_{\theta} \mathbf{j}(\theta_c) \cr 1\end{array}\right)
\label{eq:PL_diff_red2_app}
\end{equation}
with retraction factor $\gamma > 0$. A similar idea has been employed by Chen and Jennrich~\cite{ChenJen2002}. Furthermore, to circumvent the potentially time-consuming calculation of the term $\nabla^2_\theta \mathbf{j}(\theta_c)  + \lambda_c\nabla_\theta^2 \mathbf{g}(\theta_c)$ they replace it with a positive definite matrix $\mathbf{w}(\theta_c)$, which depends at most on the first order derivatives of the parameter-to-state map. A possible choice for $\mathbf{w}(\theta_c)$ is the Fisher Information Matrix (FIM), which is for the objective function~\req{eq: reduced objective function} given as 
\begin{equation*}
\eqalign{
\fl \mathbf{w}_{i,j}(\theta_c)&=\mathbf{J}_{\theta_i \theta_j}(\theta_c, S(\theta_c))+\mathbf{J}_{\theta_i u}(\theta_c,S(\theta_c))S_{\theta_j}(\theta_c)+\mathbf{J}_{u \theta_j}(\theta_c,S(\theta_c))S_{\theta_i}(\theta_c) \\
\fl&+\mathbf{J}_{u u}(\theta_c,S(\theta_c))S_{\theta_i}(\theta_c)S_{\theta_j}(\theta_c).}
\end{equation*}
The replacement introduces an approximation error which also results in an underestimation of the profile likelihood. Chen and Jennrich~\cite{ChenJen2002} proved that as the retraction factor $\gamma > 0$ increases, the trajectory of~(\ref{eq:PL_diff_red2_app}) approaches the trajectory of~(\ref{eq:PL_diff_red2}). For $\gamma \rightarrow \infty$, we obtain the singular perturbed system which evolves along the profile \cite{KhalilBook2002}. In this reduced setting, the result from \cite{ChenJen2002} applies directly.

\section{Profile likelihood calculation for the full problem}
\label{sec:Profile likelihood calculation for the full problem}

In the previous section, the reduced problem was considered using the parameter-to-state map $S(\theta)$. The evaluation of $S(\theta)$ requires the accurate numerical simulation of the dynamical system. As this might be computationally inefficient, we introduce optimization and integration based profile likelihood calculation for the non-reduced form of the PDE constrained optimization problem.

\subsection{Optimization based profile likelihood calculation}

The optimization based profile likelihood calculation introduced in Section~\ref{sec: Optimization based profile likelihood calculation} relies on the solution of the reduced optimization problem~\req{eq:PL} for every grid point $c_l$. The reduced optimization problem, however, can be replaced by the solution of the PDE constrained optimization problem, 
\begin{equation}
\label{eq:MLE_profile}
\eqalign{
\min_{\theta\in\Theta, u\in W(0,T)}  &\mathbf{J}(\theta,u) \cr
\mbox{s.t. } & u_t+C(\theta,u) = f(\theta)\cr
& u(0)=u_0\cr
& G(\theta,u) = c.}
\end{equation}
We denote the optimal solution by $\left(\theta_c,u_c\right) := \left(\hat\theta(c),\hat u(c)\right)$. This problem can be solved using local optimization, starting at initial points constructed from the previous grid points. For $\theta_c$ and $u_c$, similar extrapolation schemes can be used as for the reduced form.

\subsection{Integration based profile likelihood calculation}

For the derivation of the integration based profile likelihood calculation, we consider the Lagrange function of the PDE constrained optimization problem~\req{eq:MLE_profile},
\begin{equation}\label{eq:Lagrange_prof}
\tilde{\mathcal{L}}(\theta,u,p,\lambda)=\mathcal{L}(\theta,u,p) + \lambda(\mathbf{G}(\theta,u)-c)
\end{equation}
with 
\begin{equation*}
\mathcal{L}(\theta,u,p)=\mathbf{J}(\theta,u)+\int^{T}_0{(-\left\langle u,p_t\right\rangle_{V,V^\ast}+\left\langle C(\theta,u)-f(\theta),p\right\rangle_{V^\ast,V})}dt.
\end{equation*}
and Lagrange multipliers $\lambda$ and $p$. The first order optimality conditions for~\req{eq:MLE_profile} at a minimizer $(\theta_c,u_c)$ are
\begin{equation}
\eqalign{
&\nabla_{\theta}\mathcal{L}(\theta_c,u_c,p_c) + \lambda_c \nabla_\theta \mathbf{G}(\theta_c,u_c) = 0\cr
&\nabla_{u}\mathcal{L}(\theta_c,u_c,p_c) + \lambda_c \nabla_{u}\mathbf{G}(\theta_c,u_c) = 0\cr
&\nabla_{p}\mathcal{L}(\theta_c,u_c,p_c) = 0\cr
&\mathbf{G}(\theta_c,u_c) = c.}
\label{eq:1storder_profile}
\end{equation}
The second line is the adjoint equation
\begin{equation}
\left\{
\begin{array}{l}
p_t-C_u(\theta,u)^* p= \mathbf{J}_u(\theta,u)  \\
p(T)=0
\end{array}\right.
\label{eq:adjoint}
\end{equation}
and the third line is the state equation~\req{eq_Model} for $p=p_c$, $u=u_c$ and $\theta=\theta_c$.
Differentiating \req{eq:1storder_profile} with respect to $c$ yields the following system for the evolution of  $(\theta_c,u_c,p_c,\lambda_c)$:
\begin{equation}
\hspace*{-2.5cm}
\underbrace{\left(\begin{array}{cccc}
\nabla_{\theta}^2\mathcal{L}+\lambda_c \nabla^2_\theta \mathbf{G}
&\nabla_{u}\nabla_{\theta}\mathcal{L} + \lambda_c \nabla_{u}\nabla_{\theta} \mathbf{G}
&\nabla_{p}\nabla_{\theta}\mathcal{L}
&\nabla_{\theta} \mathbf{G}
\\
\nabla_{\theta}\nabla_{u}\mathcal{L}  + \lambda_c \nabla_{\theta}\nabla_{u} \mathbf{G}
&\nabla_{u}^2\mathcal{L}  + \lambda_c \nabla_{u}^2 \mathbf{G}
&\nabla_{p}\nabla_{u}\mathcal{L}
&\nabla_{u} \mathbf{G}
\\
\nabla_{\theta}\nabla_{p}\mathcal{L}
&\nabla_{u}\nabla_{p}\mathcal{L}
&0
&0\\
\nabla_{\theta} \mathbf{G}^T
& \nabla_{u} \mathbf{G}^T
&0
&0
\end{array}\right)}_{M_\mathrm{full}(\theta_c,u_c,p_c,\lambda_c)}
\left(\begin{array}{c}
\dot{\theta}_c\\
\dot{u}_c\\
\dot{p}_c\\
\dot{\lambda}_c
\end{array}\right)
=\left(\begin{array}{c}
0\\
0\\
0\\
1
\end{array}\right).
\label{eq:PL_diff_unred}
\end{equation}
where we skipped the arguments $(\theta_c,u_c,p_c)$ of the derivatives of $\mathcal{L}$ and $\mathbf{G}$. The derivatives of $\mathcal{L}$ are
\begin{equation}\label{eq:second_derivatives}
\eqalign{
\fl \mathcal{L}_{\theta_i \theta_j}(\theta_c,u_c,p_c)&=\mathbf{J}_{\theta_i \theta_j}(\theta_c,u_c)+\int^{T}_0{\left\langle C_{\theta_i \theta_j}(\theta_c,u_c)-f_{\theta_i \theta_j}(\theta_c),p_c\right\rangle_{V^\ast,V}}dt \\
\fl \nabla_{u}\mathcal{L}_{\theta_i}(\theta_c,u_c,p_c) &=\mathbf{J}_{\theta_i u}(\theta_c, u_c)+C_{\theta_i u}(\theta_c,u_c)^\ast p_c\\
\fl \nabla_{p}\mathcal{L}_{\theta_i}(\theta_c,u_c,p_c) &=C_{\theta_i}(\theta_c,u_c)-f_{\theta_i}(\theta_c)\\
\fl \nabla^2_{u}\mathcal{L}(\theta_c,u_c,p_c)&=\mathbf{J}_{uu}(\theta_c,u_c)+C_{uu}(\theta_c,u_c)^\ast p_c \\
\fl \nabla_{u}\nabla_{p}\mathcal{L}(\theta_c,u_c,p_c)&=\partial_t+C_u(\theta_c,u_c)
}
\end{equation}
The other mixed partial derivatives of the Langrange function follow by symmetry if all involved functions are twice continuously differentiable.

The trajectories of~\req{eq:PL_diff_unred} provide the profile likelihood for the non-reduced problem, namely $\theta_c$ and $u_c$. For the numerical integration an explicit or implicit time stepping scheme can be used. Similarly to the reduced problem, the approximation errors can be reduced by introduction of a retraction term,
\begin{equation}
\hspace*{-2.5cm}
\left(\begin{array}{cccc}
W_{uu\,c}
&W_{u\theta\,c}
&W_{p\theta\,c}
&\nabla_{\theta}\mathbf{G}
\\
W_{\theta u\,c}
&W_{uu\,c}
&W_{pu\,c}
&\nabla_{u}\mathbf{G}
\\
W_{\theta p\,c}
&W_{up\,c}
&W_{pp\,c}
&0\\
\nabla_{\theta}\mathbf{G}^T
&\nabla_{u}\mathbf{G}^T
&0
&0
\end{array}\right)
\left(\begin{array}{c}
\dot{\hat{\theta}}_c\\
\dot{\hat{u}}_c\\
\dot{\hat{p}}_c\\
\dot{\hat{\lambda}}_c
\end{array}\right)
=\left(\begin{array}{c}
-\gamma \nabla_\theta \mathcal{L}\\
-\gamma \nabla_u \mathcal{L}\\
-\gamma \nabla_p \mathcal{L}\\
1
\end{array}\right),
\label{eq:PL_diff_unred_retr}
\end{equation}
with retraction factor $\gamma > 0$. The retraction damps the accumulation of numerical errors and ensures a more accurate profile likelihood approximation. Furthermore, the retraction allows for the replacement of the matrix $M_\mathrm{full}(\theta_c,u_c,p_c,\lambda_c)$ with a positive definite matrix to circumvent the need for second order information. The resulting approximation error can be controlled using $\gamma$. A large retraction factor results, however, in an increased stiffness of the dynamical system. Extending the proof in~\cite{ChenJen2002} from finite dimensions and ODEs to the PDE setting in function spaces we can show that the difference between solutions to the original system and the approximated one with retraction can be made arbitrarily small by an appropriate choice of the retraction factor $\lambda_c$, see 
Proposition \ref{prop:retr_full} below.
For this purpose we abbreviate $\xi_c=(\theta_c,u_c,p_c)$, $\hat{\xi}_c=(\hat{\theta}_c,\hat{u}_c,\hat{p}_c)$, $\mathbf{G}(\xi)=\mathbf{G}(\theta,u)$, $X=\R^{n_\theta}\times W(0,T)^2$, so that we can rewrite \req{eq:PL_diff_unred} and \req{eq:PL_diff_unred_retr} more compactly as
\begin{equation}
\hspace*{-2.5cm}
\left(\begin{array}{cc}
\nabla_{\xi}^2 \mathcal{L}(\xi_c)+\lambda_c \nabla_{\xi}^2 \mathbf{G}(\xi_c)&\nabla_{\xi}\mathbf{G}(\xi_c)
\\
\nabla_{\xi}\mathbf{G}(\xi_c)^T&0
\end{array}\right)
\left(\begin{array}{c}
\dot{\xi}_c\\
\dot{\lambda}_c
\end{array}\right)
=\left(\begin{array}{c}
0\\
1
\end{array}\right),
\label{eq:PL_diff_unred_x}
\end{equation}

\begin{equation}
\hspace*{-2.5cm}
\left(\begin{array}{cc}
W_c&\nabla_{\xi}\mathbf{G}(\hat{\xi}_c)
\\
\nabla_{\xi}\mathbf{G}(\hat{\xi}_c)^T&0
\end{array}\right)
\left(\begin{array}{c}
\dot{\hat{\xi}}_c\\
\dot{\hat{\lambda}}_c
\end{array}\right)
=\left(\begin{array}{c}
-\gamma \nabla_{\xi} \mathcal{L}(\hat{\xi}_c)\\
1
\end{array}\right).
\label{eq:PL_diff_unred_retr_x}
\end{equation}
We assume that the family of linear operators $W_c:X\to X^*$ satisfies the following properties:
\begin{equation}\label{Wsymm}
\hspace*{-2.5cm}
\forall c\in[c_0,c_1] \ \forall \xi,\zeta \in X:  \langle W_c \xi, \zeta\rangle_{X^*,X} =\langle W_c \zeta, \xi\rangle_{X^*,X}
\quad \mbox{(symmetry)}
\end{equation}
\begin{equation}\label{Wpos}
\hspace*{-2.5cm}
\forall c\in[c_0,c_1] \ \forall \xi \in X:  \langle W_c \xi, \xi\rangle_{X^*,X} \geq \gamma_W \|\xi\|_X^2
\quad \mbox{(positivity)}
\end{equation}
\begin{equation}\label{Wbd}
\hspace*{-2.5cm}
\exists \bar{M}>0\ \forall c\in[c_0,c_1]: \ \| W_c\|=\sup_{\xi,\zeta\in X,\, \|\xi\|_X\leq 1, \, \|\zeta\|_X\leq1} \hspace*{-1cm}\langle W_c \xi, \zeta\rangle_{X^*,X} \leq \bar{M}_W
\quad \mbox{(boundedness)
}
\end{equation}
as well as the following sufficient second order condition at the minimizers $(\xi_c,\lambda_c)$
\begin{equation}\label{ssc}
\hspace*{-2.5cm}
\exists \gamma_L>0 \, \forall c\in[c_0,c_1] \, \forall \zeta\in \nabla_{\xi} \mathbf{G}(\xi_c)_\bot: \, 
\langle (\nabla_{\xi}^2 \mathcal{L}(\xi_c)+\lambda_c \nabla_{\xi}^2 \mathbf{G}(\xi_c)) \zeta, \zeta\rangle_{X^*,X} \geq \gamma_L \|\zeta\|_X^2
\end{equation}
where $\nabla_{\xi} \mathbf{G}(\xi_c)_\bot=\{\zeta\in X \, : \, \langle\nabla_{\xi} \mathbf{G}(\xi_c),\zeta\rangle_{X^*,X}=0\}$ is the tangential cone corres\-pon\-ding to the equality constraint $\mathbf{G}(\xi)=0$. 
\begin{proposition}\label{prop:retr_full}
Let $\mathbf{G}$, $\mathcal{L}$ be twice continuously differentiable, and let \req{Wsymm}, \req{Wpos}, \req{ssc} be satisfied and let $\xi_c$, $\hat{\xi}_c$, $c\in[c_0,c_1]$ be solutions to \req{eq:PL_diff_unred_x} and \req{eq:PL_diff_unred_retr_x}, respectively.\\
Then for any $\kappa>0$, and for any $\tilde{\epsilon}>0$ sufficiently small, there exists $\rho>0$ sufficiently small and $\lambda>0 $ sufficiently large, such that if $e_{c_0}<\rho$ then 
\begin{equation}\label{bdexpdecay}
\forall c\in[c_0,c_1] \  \|\hat{\xi}_{c}-\xi_{c}\|_X\leq\rho \mbox{ and } e_c\leq  \frac{\tilde{\epsilon}}{\kappa} + e_{c_0} \exp(-\kappa(c-c_0))
\end{equation}
holds, where $e_c= \langle W_c \hat{\xi}_{c}-\xi_{c}, \hat{\xi}_{c}-\xi_{c}\rangle_{X^*,X}\geq \gamma_W \|\hat{\xi}_{c}-\xi_{c}\|_X^2$.\\
Moreover, for any $\varepsilon>0$ and any $\tilde{c}\in(c_0,c_1]$ there exists $\lambda>0$ such that 
\begin{equation}\label{conv}
\forall c\in[\tilde{c},c_1] \  \|\hat{\xi}_{c}-\xi_{c}\|_X\leq\varepsilon\,.
\end{equation}
\end{proposition}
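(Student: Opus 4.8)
The plan is to treat Proposition~\ref{prop:retr_full} as a Lyapunov stability statement for the error $d_c:=\hat{\xi}_c-\xi_c$, using the weighted energy $e_c=\dup{W_c d_c}{d_c}$ itself as the Lyapunov functional. By \req{Wpos} and \req{Wbd} this functional is norm-equivalent, $\gamma_W\|d_c\|_X^2\le e_c\le\bar M_W\|d_c\|_X^2$, so every statement in the proposition can be read off from a bound on $e_c$. The goal is the differential inequality $\dot e_c\le-\kappa e_c+\tilde\epsilon$, whose Gronwall integral is precisely $e_c\le\frac{\tilde\epsilon}{\kappa}+e_{c_0}\exp(-\kappa(c-c_0))$, i.e. \req{bdexpdecay}; the pointwise smallness $\|d_c\|_X\le\rho$ and the convergence \req{conv} then follow from the two summands of this bound.

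First I would differentiate $e_c$ in $c$ and eliminate $\dot{\hat{\xi}}_c$ through the first block row of \req{eq:PL_diff_unred_retr_x}, namely $W_c\dot{\hat{\xi}}_c=-\gamma\nabla_\xi\mathcal L(\hat{\xi}_c)-\nabla_\xi\mathbf G(\hat{\xi}_c)\dot{\hat{\lambda}}_c$. Using the symmetry \req{Wsymm} this gives
\[
\dot e_c=\dup{\dot W_c d_c}{d_c}-2\gamma\dup{\nabla_\xi\mathcal L(\hat{\xi}_c)}{d_c}-2\dot{\hat{\lambda}}_c\dup{\nabla_\xi\mathbf G(\hat{\xi}_c)}{d_c}-2\dup{W_c\dot{\xi}_c}{d_c},
\]
where the second term is the stabilising contribution carrying the large factor $\gamma$ and the remaining three are perturbations. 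Throughout I would use that $\xi_c,\lambda_c,\dot{\xi}_c,\dot{\hat{\lambda}}_c$ remain bounded on the compact interval $[c_0,c_1]$ and, tacitly, that $c\mapsto W_c$ is Lipschitz so that $\dot W_c$ is bounded.

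The crux is a uniform coercivity estimate $\dup{\nabla_\xi\mathcal L(\hat{\xi}_c)}{d_c}\ge c_L\|d_c\|_X^2-C\|d_c\|_X^3$ for some $c_L>0$. I would obtain it by Taylor expanding $\nabla_\xi\mathcal L(\hat{\xi}_c)=\nabla_\xi\mathcal L(\xi_c)+\nabla_\xi^2\mathcal L(\xi_c)d_c+O(\|d_c\|_X^2)$, substituting the stationarity condition $\nabla_\xi\mathcal L(\xi_c)=-\lambda_c\nabla_\xi\mathbf G(\xi_c)$ from \req{eq:1storder_profile}, and exploiting that both profiles satisfy the finite constraint $\mathbf G(\hat{\xi}_c)=\mathbf G(\xi_c)=c$, which forces $\dup{\nabla_\xi\mathbf G(\xi_c)}{d_c}=O(\|d_c\|_X^2)$. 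Hence $d_c$ lies within $O(\|d_c\|_X^2)$ of the tangential cone $\nabla_\xi\mathbf G(\xi_c)_\bot$, and applying the second order sufficient condition \req{ssc} to the projection of $d_c$ onto that cone delivers the asserted lower bound up to the cubic remainder. I expect this to be the main obstacle: one must bookkeep the multiplier terms $\lambda_c\nabla_\xi^2\mathbf G$ correctly, justify the Taylor remainders and the constraint-consistency of the initial data in the function space $X$, and check that the constraint qualification ($\|\nabla_\xi\mathbf G(\xi_c)\|_{X^*}$ bounded below) and all constants are uniform in $c$.

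Finally I would assemble the inequality. Absorbing the cubic remainder under the a priori hypothesis $\|d_c\|_X\le\rho$ with $\rho$ small, and bounding the three perturbation terms by $C_1\|d_c\|_X^2+C_2\|d_c\|_X$, yields $\dot e_c\le-(\gamma c_L-C_1)\|d_c\|_X^2+C_2\|d_c\|_X$. Choosing $\gamma$ so large that $\gamma c_L-C_1\ge\frac12\gamma c_L\ge2\kappa\bar M_W$ and splitting the linear term by Young's inequality, $C_2\|d_c\|_X\le\frac14\gamma c_L\|d_c\|_X^2+C_2^2/(\gamma c_L)$, produces $\dot e_c\le-\kappa e_c+C_2^2/(\gamma c_L)$, and enlarging $\gamma$ further makes the residual at most $\tilde\epsilon$. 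A continuous-induction (bootstrap) argument then promotes the hypothesis $\|d_c\|_X\le\rho$ to a conclusion on all of $[c_0,c_1]$, because the Gronwall bound keeps $e_c\le\frac{\tilde\epsilon}{\kappa}+e_{c_0}$ below $\gamma_W\rho^2$ once $\tilde\epsilon,\rho$ are chosen compatibly. For \req{conv}, given $\varepsilon$ and $\tilde c>c_0$ I would first pick $\kappa$ so that the transient $e_{c_0}\exp(-\kappa(\tilde c-c_0))$ is negligible for $c\ge\tilde c$ and then $\gamma$ so that $\tilde\epsilon/\kappa\le\gamma_W\varepsilon^2$, bounding $\|d_c\|_X\le\varepsilon$ on $[\tilde c,c_1]$.
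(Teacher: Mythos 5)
Your overall architecture --- taking $e_c=\langle W_c d_c,d_c\rangle_{X^*,X}$ with $d_c=\hat{\xi}_c-\xi_c$ as a Lyapunov functional, deriving $\dot e_c\le\tilde\epsilon-\kappa e_c$, applying Gronwall, and promoting the a priori bound $\|d_c\|_X\le\rho$ by continuous induction --- is exactly the paper's, and your treatment of the bootstrap, of the Young-inequality split of the linear term, and of \req{conv} matches the Appendix. The gap is in the central coercivity step, and it is twofold. First, $\dot{\hat\lambda}_c$ is \emph{not} bounded uniformly in the retraction factor: solving \req{eq:PL_diff_unred_retr_x} for it (apply $W_c^{-1}$ to the first block row, pair with $\nabla_\xi\mathbf{G}(\hat\xi_c)$, use the second row) gives
\begin{equation*}
\dot{\hat\lambda}_c=\frac{-1-\gamma\,\langle \nabla_\xi\mathcal{L}(\hat\xi_c),\tilde g_c\rangle_{X^*,X}}{\langle \nabla_\xi\mathbf{G}(\hat\xi_c),\tilde g_c\rangle_{X^*,X}},\qquad \tilde g_c=W_c^{-1}\nabla_\xi\mathbf{G}(\hat\xi_c),
\end{equation*}
whose leading part is $\approx\gamma\lambda_c$ because $\nabla_\xi\mathcal{L}(\xi_c)=-\lambda_c\nabla_\xi\mathbf{G}(\xi_c)$. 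Hence your third term $-2\dot{\hat\lambda}_c\langle\nabla_\xi\mathbf{G}(\hat\xi_c),d_c\rangle_{X^*,X}$ is of order $\gamma\|d_c\|_X^2$, i.e.\ the same order in both $\gamma$ and $\|d_c\|_X$ as the stabilising term, not a harmless perturbation. Second, the claimed estimate $\langle\nabla_\xi\mathcal{L}(\hat\xi_c),d_c\rangle_{X^*,X}\ge c_L\|d_c\|_X^2-C\|d_c\|_X^3$ is false in general: Taylor expansion plus stationarity yields
\begin{equation*}
\langle\nabla_\xi\mathcal{L}(\hat\xi_c),d_c\rangle_{X^*,X}=\langle(\nabla_\xi^2\mathcal{L}+\lambda_c\nabla_\xi^2\mathbf{G})(\xi_c)d_c,d_c\rangle_{X^*,X}-\lambda_c\langle\nabla_\xi\mathbf{G}(\hat\xi_c),d_c\rangle_{X^*,X}+o(\|d_c\|_X^2),
\end{equation*}
and the middle term equals $-\tfrac{\lambda_c}{2}\langle\nabla_\xi^2\mathbf{G}(\eta)(d_c,d_c),1\rangle$ by the constraint $\mathbf{G}(\hat\xi_c)=\mathbf{G}(\xi_c)=c$ --- a genuine quadratic term of uncontrolled sign whose size $|\lambda_c|\,\|\nabla_\xi^2\mathbf{G}\|$ need not be dominated by $\gamma_L$. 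The sufficient condition \req{ssc} controls $\nabla_\xi^2\mathcal{L}+\lambda_c\nabla_\xi^2\mathbf{G}$ on the tangential cone, not $\nabla_\xi\mathcal{L}$ alone, so "bookkeeping the multiplier terms" is not a technicality but the heart of the proof.

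These two defects do cancel against each other, but only if they are treated jointly, and your plan does not identify the cancellation. The paper's device is to eliminate $\dot{\hat\lambda}_c$ \emph{before} forming the energy identity, using the projection $P_c$ onto $\mathrm{span}(\tilde g_c)$ that is orthogonal with respect to the inner product $(\cdot,\cdot)_{W_c}$; this reduces \req{eq:PL_diff_unred_retr_x} to $\dot{\hat\xi}_c=\langle\nabla_\xi\mathbf{G}(\hat\xi_c),\tilde g_c\rangle_{X^*,X}^{-1}\tilde g_c-\gamma(I-P_c)W_c^{-1}\nabla_\xi\mathcal{L}(\hat\xi_c)$, and when the retraction term is then tested against $d_c$ one may freely replace $\nabla_\xi\mathcal{L}(\hat\xi_c)$ by $\nabla_\xi\mathcal{L}(\hat\xi_c)+\lambda_c\nabla_\xi\mathbf{G}(\hat\xi_c)$ because $(I-P_c)\tilde g_c=0$. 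After that substitution the Taylor expansion produces exactly the operator of \req{ssc} applied to $(I-P_c)d_c$, which lies in the tangential cone, while $\|P_cd_c\|_X=O(\|d_c\|_X^2)$; all remainders are then genuinely higher order. To repair your argument you would either have to adopt this projection, or expand $\dot{\hat\lambda}_c$ explicitly and verify that its $\gamma\lambda_c$ part cancels the $-\lambda_c\langle\nabla_\xi\mathbf{G}(\hat\xi_c),d_c\rangle_{X^*,X}$ residual --- as written, the differential inequality does not close.
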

The proof (see the Appendix) shows that $\lambda=\lambda_c$ can be chosen adaptively, depending on the artificial time parameter $c$.

\section{Comparison of full and reduced formulation of integration based profile likelihood calculation}
\label{sec:Comparison of full and reduced formulation of integration based profile likelihood calculation}

The full and reduced formulations of integration based profile calculation provide different view points on the problem. In the following, we will establish equivalence under the assumption of the identity $u_c = S(\theta_c)$. In addition, the computational implementation will be discussed.

\subsection{Equivalence of calculated profile likelihoods}

The reduced formulation~\req{eq:PL_diff_red2} and the full formulation~\req{eq:PL_diff_unred},
\begin{equation*}
\hspace{-2cm}
 M_\mathrm{red}(\theta_c,\lambda_c)\left(\begin{array}{c}\dot{\theta}_c \\ \dot{\lambda}_c\end{array}\right)=
\left(\begin{array}{c}0\\ 1\end{array}\right)
\quad \mathrm{and} \quad 
M_\mathrm{full}(\theta_c,u_c,p_c,\lambda_c)\left(\begin{array}{c}\dot{\theta}_c\\ \dot{u}_c\\ \dot{p}_c \\ \dot{\lambda}_c\end{array}\right)= 
\left(\begin{array}{c}0\\0\\0\\1\end{array}\right),
\end{equation*}
provide two alternative approaches to calculate the profile likelihood path $\theta_c$. The validity of the state equation for $u_c$, which is ensured by the initial condition satisfying~\req{eq:1storder_profile} gives the identity $u_c=S(\theta_c)$. With this identity as well as the evolution \req{eq:PL_diff_unred}, we will show the equivalence of both approaches in the following.

\begin{proposition}
Under Assumptions~\ref{Ass:weak_solution} and~\ref{Ass:diff} solving the full system~\req{eq:PL_diff_unred} and the reduced system~\req{eq:PL_diff_red2} yields the same profile likelihood path $\theta_c$.
\end{proposition}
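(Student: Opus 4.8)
The plan is to show that any solution $(\theta_c,u_c,p_c,\lambda_c)$ of the full DAE~\req{eq:PL_diff_unred} restricts to a solution $(\theta_c,\lambda_c)$ of the reduced DAE~\req{eq:PL_diff_red2}, and then to conclude by uniqueness of the reduced trajectory. The point of departure, already recorded before the statement, is that the state and adjoint equations (the third and second lines of~\req{eq:1storder_profile}) are algebraic relations satisfied by the initial data and differentiated by~\req{eq:PL_diff_unred}; hence they persist along the flow, giving $u_c=S(\theta_c)$ throughout and identifying $p_c$ with the adjoint state solving the second line of~\req{eq:1storder_profile}. Differentiating $u_c=S(\theta_c)$ in $c$ yields $\dot u_c=S'(\theta_c)\dot\theta_c=\sum_i S_{\theta_i}(\theta_c)\dot\theta_{c,i}$.

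First I would eliminate the inner variables $(\dot u_c,\dot p_c)$ from~\req{eq:PL_diff_unred}. Using~\req{eq:second_derivatives}, the third block-row reads $(\nabla_\theta C-\nabla_\theta f)\dot\theta_c+(\partial_t+C_u)\dot u_c=0$, which is exactly the first order forward sensitivity equation~\req{eq:1st order forward sensitivities} in the direction $\dot\theta_c$ and therefore reconfirms $\dot u_c=S'(\theta_c)\dot\theta_c$. The second block-row is the linearized adjoint (sensitivity-of-the-adjoint) equation and determines $\dot p_c$ in terms of $\dot\theta_c$ and $\dot\lambda_c$. Substituting both expressions into the first and fourth block-rows produces a closed $2\times 2$ block system in $(\dot\theta_c,\dot\lambda_c)$.

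It then remains to identify the coefficients of this reduced system with $M_\mathrm{red}(\theta_c)$ and right-hand side $(0,1)^T$. The fourth row becomes $\bigl(\nabla_\theta\mathbf{G}+S'(\theta_c)^*\nabla_u\mathbf{G}\bigr)^T\dot\theta_c=\nabla_\theta\mathbf{g}(\theta_c)^T\dot\theta_c=1$ by the chain rule for $\nabla_\theta\mathbf{g}$, i.e. the second line of~\req{eq:PL_diff_red2}. The first row, after elimination of $\dot u_c$ and $\dot p_c$, must reproduce $(\nabla^2_\theta\mathbf{j}(\theta_c)+\lambda_c\nabla^2_\theta\mathbf{g}(\theta_c))\dot\theta_c+\nabla_\theta\mathbf{g}(\theta_c)\dot\lambda_c=0$; equivalently, the Schur complement of the $(u,p)$-block of $M_\mathrm{full}$ in the $(\theta,\lambda)$-variables equals $M_\mathrm{red}$. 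I expect this identification to be the main obstacle: one has to verify that the adjoint-based second derivatives in~\req{eq:second_derivatives} (carrying $\mathbf{J}_{uu}$, $C_{uu}^\ast p_c$, $C_{\theta u}^\ast p_c$ and the mixed terms) recombine, once $\dot u_c,\dot p_c$ are substituted, into the sensitivity-based reduced Hessian recorded in~\req{second_deriv_reduced} and the analogous formula for $\mathbf{g}$. This amounts to the standard but delicate fact that the adjoint and forward-sensitivity constructions of the reduced Hessian coincide, and it is here that Assumption~\ref{Ass:diff} (twice continuous differentiability, hence symmetry of all mixed derivatives) is used.

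Finally, both~\req{eq:PL_diff_red2} and~\req{eq:PL_diff_unred} are index-one DAEs with invertible leading operators --- for~\req{eq:PL_diff_red2} this is the saddle-point structure together with a second order sufficient condition on the reduced Hessian --- so each has a unique trajectory through consistent initial data. Since the initial data of the full system solve~\req{eq:1storder_profile} and hence restrict to initial data solving~\req{eq:PL_diff_red}, the projected trajectory $(\theta_c,\lambda_c)$ coincides with the unique solution of the reduced system; in particular the paths $\theta_c$, and with them the profile likelihoods $\mathbf{j}(\theta_c)$, agree.
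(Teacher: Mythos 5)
Your strategy is the same as the paper's: eliminate $(\dot u_c,\dot p_c)$ via the second and third block-rows of~\req{eq:PL_diff_unred}, recognize the third row as the forward sensitivity equation so that $\dot u_c=S'(\theta_c)\dot\theta_c$, and reduce the question to showing that the Schur complement of the $(u,p)$-block of $M_\mathrm{full}$ equals $M_\mathrm{red}$. Your treatment of the fourth row (chain rule giving $\nabla_\theta\mathbf{g}(\theta_c)^T\dot\theta_c=1$) is correct, and your closing uniqueness argument for the two index-one DAEs through consistent initial data is a reasonable (and slightly more explicit) way to finish than the paper's purely algebraic identification of the two systems.

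However, there is a genuine gap at exactly the point you flag as ``the main obstacle'': you assert that the Schur complement must reproduce $\nabla_\theta^2\mathbf{j}(\theta_c)+\lambda_c\nabla_\theta^2\mathbf{g}(\theta_c)$ because ``the adjoint and forward-sensitivity constructions of the reduced Hessian coincide,'' but you never carry out this verification, and it is the entire substance of the proposition. The paper closes it by an explicit computation: writing $\tilde M=\nabla_\theta^2\mathcal{L}+\nabla_u\nabla_\theta\mathcal{L}(-\nabla_u\nabla_p\mathcal{L})^{-1}\nabla_\theta\nabla_p\mathcal{L}-\nabla_p\nabla_\theta\mathcal{L}(\nabla_p\nabla_u\mathcal{L})^{-1}\nabla_\theta\nabla_u\mathcal{L}-\nabla_p\nabla_\theta\mathcal{L}(\nabla_p\nabla_u\mathcal{L})^{-1}\nabla_u^2\mathcal{L}(-\nabla_u\nabla_p\mathcal{L})^{-1}\nabla_\theta\nabla_p\mathcal{L}$, substituting the explicit second derivatives~\req{eq:second_derivatives} together with the operator identity $(-\nabla_p\mathcal{L}_{\theta_i})(\nabla_p\nabla_u\mathcal{L})^{-1}=S_{\theta_i}(\theta_c)$, and then --- this is the step your sketch does not supply --- invoking the identity
\begin{equation*}
\int_0^T\bigl\langle h_{ij}^{\theta_c},p_c\bigr\rangle_{V^\ast,V}\,dt=\mathbf{J}_u(\theta_c,S(\theta_c))\,S_{\theta_i\theta_j}(\theta_c),
\end{equation*}
which converts the collection of adjoint terms $\int_0^T\langle C_{\theta_i\theta_j}-f_{\theta_i\theta_j}+C_{\theta_i u}S_{\theta_j}+C_{u\theta_j}S_{\theta_i}+C_{uu}S_{\theta_i}S_{\theta_j},\,p_c\rangle\,dt$ into the missing term $\mathbf{J}_u S_{\theta_i\theta_j}$ of the sensitivity-based reduced Hessian~\req{second_deriv_reduced}. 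Without producing this identity (it follows from testing the second-order sensitivity equation~\req{eq:2nd order forward sensitivities} against the adjoint state $p_c$ and integrating by parts), the claimed agreement of the first block-row with the reduced system remains an assertion rather than a proof.
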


\begin{proof}
The operators $\nabla_u \nabla_p \mathcal{L}(\theta_c,u_c,p_c,\lambda_c)$ and $\nabla_p\nabla_u\mathcal{L}(\theta_c,u_c,p_c,\lambda_c)$ represent the linearised state and the adjoint equation, respectively and are thus invertible under Assumptions \ref{Ass:weak_solution} and \ref{Ass:diff}.
Therefore we can formally eliminate the variables $(\dot{u}_c,\dot{p}_c)$ by means of the second and third line in the system \req{eq:PL_diff_unred}, which yields
\begin{equation}
\eqalign{
 \dot{u}_c=-(\nabla_u\nabla_p\mathcal{L})^{-1}
\nabla_\theta\nabla_p\mathcal{L}\,\dot{\theta}_c\\
 \dot{p}_c=(\nabla_p\nabla_u\mathcal{L})^{-1}
\Bigl(\nabla_u^2\mathcal{L}(\nabla_u\nabla_p\mathcal{L})^{-1}
\nabla_\theta\nabla_p\mathcal{L}-\nabla_\theta\nabla_u\mathcal{L}
\Bigr)\dot{\theta}_c\,,
}
\label{eq:ucpc}
\end{equation}
where we have skipped the arguments $(\theta_c,u_c,p_c)$ of the Lagrangian for better readability.
Inserting this into \req{eq:PL_diff_unred} yields
\begin{equation*}
\left(\begin{array}{cc}
\tilde{M}+\lambda_c \nabla_{\theta}^2 g(\theta_c)& \nabla_{\theta} g(\theta_c)\cr
\nabla_{\theta} g(\theta_c)^T & 0\end{array}\right)
\left(\begin{array}{c}\dot{\theta}_c\cr \dot{\lambda}_c\end{array}\right)=\left(\begin{array}{c}0\cr 1\end{array}\right)
\end{equation*}
with 
\begin{equation*}
\hspace{-2cm}
\eqalign{
\tilde{M}&=\nabla_\theta^2\mathcal{L}
+\nabla_u\nabla_\theta\mathcal{L}(-\nabla_u\nabla_p\mathcal{L})^{-1}
\nabla_\theta\nabla_p\mathcal{L}
-\nabla_p\nabla_\theta\mathcal{L}(\nabla_p\nabla_u\mathcal{L})^{-1}
\nabla_\theta\nabla_u\mathcal{L}\\
&-\nabla_p\nabla_\theta\mathcal{L}(\nabla_p\nabla_u\mathcal{L})^{-1}
\nabla_u^2\mathcal{L}(-\nabla_u\nabla_p\mathcal{L})^{-1}
\nabla_\theta\nabla_p\mathcal{L}\,.
}
\end{equation*}
Thus to show equivalence with \req{eq:PL_diff_red2} it only remains to verify that
$\tilde{M}=\nabla_{\theta}^2 \mathbf{j}(\theta_c)$.
With the second derivatives according to~\req{eq:second_derivatives} and
$(-\nabla_p \mathcal{L}_{\theta_i})(\nabla_p\nabla_u\mathcal{L})^{-1}=S_{\theta_i}(\theta_c)$
we get
\begin{equation*}
\eqalign{
\fl\tilde{M}_{i,j}&=\mathbf{J}_{\theta_i \theta_j}(\theta_c,S(\theta_c))+\int^T_0{\left\langle{C_{\theta_i \theta_j}(\theta_c,S(\theta_c))-f_{\theta_i \theta_j}(\theta_c),p_c}\right\rangle_{V^\ast,V}}dt+\mathbf{J}_{\theta_i u}(\theta_c,S(\theta_c))S_{\theta_j}(\theta_c)\\
\fl&+\int^T_0\left\langle{C_{\theta_i u}(\theta_c,S(\theta_c))S_{\theta_j}(\theta_c),p_c}\right\rangle_{V^\ast,V}dt
+\int^T_0{\left\langle C_{uu}(\theta_c,S(\theta_c))S_{\theta_i}(\theta_c)S_{\theta_j}(\theta_c),p_c\right\rangle_{V^\ast,V}}dt\\
\fl &+\mathbf{J}_{uu}(\theta_c,S(\theta_c))S_{\theta_i}(\theta_c)S_{\theta_j}(\theta_c)
+\mathbf{J}_{u \theta_j}(\theta_c, S(\theta_c))S_{\theta_i}(\theta_c) \\
\fl &+\int^T_0{\left\langle{C_{u\theta_j}(\theta_c,S(\theta_c))S_{\theta_i}(\theta_c),p_c}\right\rangle_{V^\ast,V}}dt\,.
}
\end{equation*}
Using the definition of $h_{ij}^{\theta_c}$ and the identity
\begin{equation*}
\int^T_0{\left\langle{h_{ij}^{\theta_c},p_c}\right\rangle_{V^\ast,V}}dt=\mathbf{J}_{u}(\theta_c,S(\theta_c))S_{\theta_i \theta_j}(\theta_c)
\end{equation*}
we obtain
\begin{equation*}
\hspace{-2.5cm}
\eqalign{
\tilde{M}_{i,j}&=\mathbf{J}_{\theta_i \theta_j}(\theta_c,S(\theta_c))+\mathbf{J}_{\theta_i u}(\theta_c,S(\theta_c))S_{\theta_j}(\theta_c)+\mathbf{J}_{uu}(\theta_c,S(\theta_c))S_{\theta_i}(\theta_c)S_{\theta_j}(\theta_c)\\
\fl&+\mathbf{J}_{u \theta_j}(\theta_c, S(\theta_c))S_{\theta_i}(\theta_c)+\mathbf{J}_u(\theta_c,S(\theta_c))S_{\theta_i \theta_j}(\theta_c),
}
\end{equation*}
which is the same as~\req{second_deriv_reduced} and therefore establishes equivalence.
\end{proof}

From the equivalence of~\req{eq:PL_diff_red2} and~\req{eq:PL_diff_unred}, we conclude that also the stabilized versions~\req{eq: reduced integration-based profile calculation, stabilized} and~\req{eq:PL_diff_unred_retr} yield the same results in the absence of numerical integration errors.

\subsection{Implementation and computational properties}

In the previous section we established equivalence of $\theta_c$ for~\req{eq:PL_diff_red2} and~\req{eq:PL_diff_unred}. This equivalence is however not ensured for the result of the trajectories of~\req{eq:PL_diff_red2} and~\req{eq:PL_diff_unred} (or their stabilized versions~\req{eq: reduced integration-based profile calculation, stabilized} and~\req{eq:PL_diff_unred_retr}) obtained by numerical simulation. The implementation and computational requirements of full and reduced systems are considerably different.

For the numerical simulation of the reduced system, the matrix-vector product $M_\mathrm{red}(\theta_c) (\dot\theta_c,\dot\lambda_c)^T$ has to be evaluated. This requires the numerical simulation of the model~\req{eq_Model} for every point $(\theta_c,\lambda_c)^T$ and either $n (n+2)/2$ linear forward PDE solves or one linear backward PDE solve (see Section~\ref{sec: Integration based profile likelihood calculation - reduced}). This implicit numerical simulation can exploit sophistical numerical solvers, requires minimal storage but can be computationally demanding. In contrast, the full system provides an explicit form. When applying an iterative solver, e.g. a CG method, only matrix vector products are needed. Applying $M_\mathrm{full}(\theta_c,u_c,p_c,\lambda_c)$ to a vector $(\dot\theta_c,\dot u_c, \dot p_c, \dot\lambda_c)^T$ only requires the evaluation of the linearization of the differential operator $\partial_t+C(\theta_c,.)$ and its adjoint, so no PDE solution. The discretization of $u_c$ and $p_c$ in space and time can however require significant storage. 

In this paper the stabilized reduced system~\req{eq: reduced integration-based profile calculation, stabilized} is implemented. For the numerical simulation an adaptive solver is employed.

\section{Numerical evaluation of integration based profile likelihood calculation}
\label{sec:Numerical evaluation of integration based profile likelihood calculation}

In the following, we will illustrate the properties of the proposed integration based profile likelihood calculation method. For this purpose, we study a biological application, i.e. the PDE model for gradient formation in a yeast cell. We consider a realistic measurement set-up but use artificial experimental data. This enables the comparison of the methods with the ground-truth available.

\subsection{Mathematical model for gradient formation in fission yeast}

To assess the properties of the proposed approach, we consider a model for gradient formation in fission yeast. Fission yeast cells are rod-shaped and their division is controlled by a gradient of the protein Pom1p in the cell membrane. Hersch et al.~\cite{Hersch2015} modelled the dynamics of the concentration of Pom1p at a position $x$, $u(t,x)$ with units $\#/\mu m$, by
\begin{equation}
\begin{array}{ll}
u_{t} = Du_{xx} -\alpha u^2 + \frac{\beta}{\sqrt{2\pi}\rho}e^{-x^2/2\rho^2} &\mathrm{for} \  ]0,T[\times ]-L,L[\\
\frac{\partial u}{\partial\nu} = 0 &\mathrm{for} \ ]0,T[\times\lbrace{-L,L\rbrace}\\
u = 0 &\mathrm{for} \lbrace{t=0\rbrace\times ]-L,L[}
\end{array}
\label{eq:App_PDE}
\end{equation}
with diffusion coefficient $D$, dimerisation rate $\alpha$, influx rate $\beta$ and source width $\rho$. The length along the membrane from the tip of the cell to the center is denoted by $L$. Model~\req{eq:App_PDE} meets Assumption~\ref{Ass:weak_solution}.

We implemented the method of lines for model~\req{eq:App_PDE} in MATLAB. The system of ODEs was implemented in AMICI (\textit{https://github.com/ICB-DCM/AMICI}) \cite{KazeroonianFro2016}. AMICI generates the first and second order sensitivity equations, enabling the evaluation of the Hessian and the Fisher Information Matrix. For the simulation, AMICI exploits the SUNDIALS solver suite~\cite{HindmarshBro2005}.

\subsection{Artificial experimental data}
For a realistic evaluation of different profile likelihood calculation methods, we generated artificial experimental data similar to previously published datasets for the considered process (see~\cite{Saunders2012}). Our artificial dataset consists of three individual datasets: 
\begin{itemize}
\item \textit{Concentration profile:}
The concentration profile provides the relative abundance of the signalling molecule along the membrane at time $t = 100 \,  s$. The interval $]-L,L[$ is divided in 60 equally sized regions $\Omega_k$, yielding the observation operators
\[Q_{k}(\theta,u) = s_1 \int_{\Omega_k} u(t = 100,x) dx \quad \mathrm{for}Ê\quad k = 1,\ldots,60,\] 
with scaling factor $s_1$ and region $\Omega_k = \left[-7 + \frac{7}{30} (k-1),-7 + \frac{7}{30} k\right]$, $k = 1,\ldots,60$.
\item \textit{Time course:}
The time course data provide the scaled overall protein abundance at 10 equally spaced time points $t_k \in [0,60] \, s$. The observation operators are 
\[Q_{60+k}(\theta,u) = s_2 \int_{-L}^{L} u(t_k,x) dx \quad \mathrm{for}Ê\quad k = 1,\ldots,10,\]
with scaling factor $s_2$.
\item \textit{Quantification:}
The quantification provides the absolute abundance of the signalling molecule at time point $t = 100 \, s$,  
\[Q_{71}(\theta,u) = \int_{-L}^{L} u(t = 100,x) dx.\] 
\end{itemize}
The artificial data sets were obtained by simulating model~\req{eq:App_PDE} for the parameters provided in Table~\ref{tab:App_Data}, and subsequently adding normally distributed measurement noise. The final data set is the mean $\bar{y}_k$ and standard deviation $\sigma_k$, $k = 1, \ldots, 71$, as depicted in Figure~\ref{fig:App_Data}. In biological applications the acquisition of measurement data is often challenging. Instead of a single highly-informative experiment, merely a series of measurements with low information content can be performed. This commonly results in observation operators $Q(\theta,u)$ with a non-standard structure.

\begin{table}
\begin{tabular}{||p{2cm}||p{2cm}||p{2cm}||p{2.5cm}|p{2.5cm}||c||} 
\hline\hline
\centering parameter & \centering true & \centering estimated &  \multicolumn{2}{c||}{ 95\% confidence interval} &units\\ 
\centering names & \centering value $\theta$ & \centering value $\widehat{\theta}$ & \centering lower bound & \centering upper bound &\\ 
\hline\hline
\centering $D$ &\centering $0.10$& \centering $ 0.15$ & \centering $0$ & \centering$> 12$&$\mu m^2/s$\\ 
\centering $\alpha$ &\centering $4.00\times10^{-4}$& \centering $ 6.07\times10^{-4}$ & \centering $ 4.60\times10^{-5}$ &\centering $>1 $&$\mu m^3/(\#\cdot s)$\\
\centering $\beta$ &\centering $8.00\times10^{3}$& \centering $ 1.21\times10^{4}$ & \centering $9.99\times10^{2}$ &\centering $> 5\times 10^{8}$&$\# \mu m / s$\\ 
\centering $\rho$ &\centering $0.60$& \centering $ 0.60$ & \centering $0.38$ & \centering$0.77$&$\mu m$\\ 
\centering $s_1$ &\centering $2.87\times10^{-4}$& \centering $ 2.95\times10^{-4}$ & \centering $2.45\times10^{-4}$ &\centering $3.47\times10^{-4}$&$ui / \#$\\
\centering $s_2$ &\centering $2.70\times10^{-5}$& \centering $ 2.72\times10^{-5}$ & \centering $2.26\times10^{-5}$ &\centering $3.34\times10^{-5}$&$ui / \#$\\
\hline\hline
\end{tabular}
\caption{\textbf{Parameter values and confidence intervals.} The parameter estimates are obtained using multi-start local optimization. The confidence intervals to a confidence level of $95\%$ are computed with the integration based profile likelihood calculation in ODE formulation with the Hessian. For confidence intervals which extend beyond the considered parameter region we write $>$ or $<$ bound, indicating practical non-identifiability. All quantities are provided in seconds $s$, micrometer $\mu m$, number of molecules $\#$ and unit of fluorescence intensity $ui$.}
\label{tab:App_Data}
\end{table}

\begin{figure}
\includegraphics[width=\textwidth]{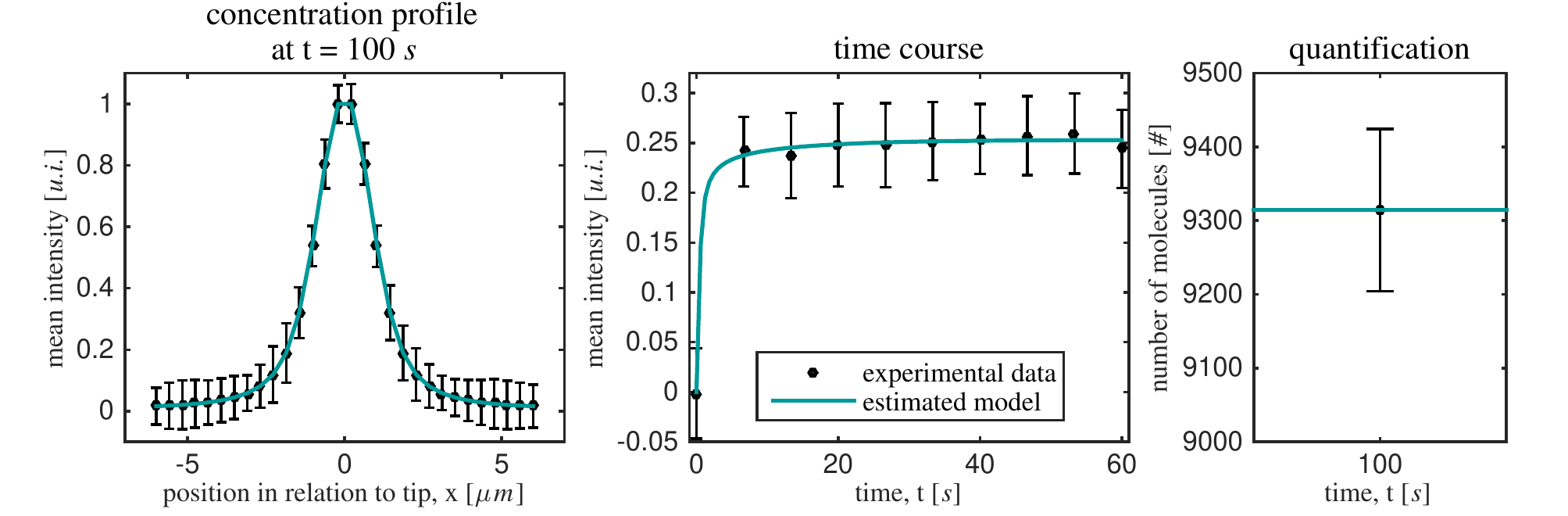}
\caption{\textbf{Artificial experimental data and model fit.} The measurement data ($\bullet$) and its standard deviation (error bar) are depicted along with the best model fit~(\textcolor{SimBlue}{\rule[.6mm]{0.25cm}{2pt}}) for (left) the concentration profile, (middle) the time course and (right) the protein abundance.}
\label{fig:App_Data}
\end{figure}

\subsection{Parameter optimization}

For the estimation of the unknown parameter vector $\theta = (D,\alpha,\beta,\rho,s_1,s_2)^T$ we use maximum likelihood estimation. The measurement noise is assumed to be normally distributed with the measured standard deviation $\sigma_k$ (see error bars in Figure~\ref{fig:App_Data}). This yields the reduced optimization problem 
\begin{equation}
\min_{\theta\in\R^{n}}  \mathbf{j}(\theta) = \frac{1}{2} \sum^{71}_{k=1} \left(\log\left(2\pi \sigma^2_{k}\right) + \left(\frac{\bar{y}_{k}-Q_{k}(\theta,S(\theta))}{\sigma_{k}}\right)^2\right)
\label{eq:App_OptProb}
\end{equation} 
in which $S(\theta)$ denotes the parameter-to-state map of~\req{eq:App_PDE}, which is evaluated using numerical integration. The optimum of~\req{eq:App_OptProb} is determined using multi-start local optimization. Therefore, the MATLAB optimizer \textit{fmincon} is initialised at 100 different starting points chosen with a space filling design, i.e. latin hyper cube sampling. More than 90\% of these optimizations converged to the same optimal likelihood value, which we assume to be global. The estimation results are shown in Table~\ref{tab:App_Data} and Figure~\ref{fig:App_Data}.

\subsection{Profile likelihood calculation}

To assess the uncertainty of the parameter estimate, we compute the profile likelihoods $\mathrm{PL}_{\theta_i}(c)$, $i = 1,\ldots,6$. Therefore we employ existing
\begin{itemize}
\item optimization based profile likelihood calculation with 0th and 1st-order proposal, 
\end{itemize}
as well as the proposed
\begin{itemize}
\item integration based profile likelihood calculation with Hessian and
\item integration based profile likelihood calculation with FIM.
\end{itemize}
For the integration based methods we compare the numerical implementation as DAE system~\req{eq:PL_diff_red2_app} 
and as ODE system,
$$\left(\begin{array}{c} \dot\theta_c \\ \dot\lambda_c \end{array}\right) = M_\mathrm{red}(\theta_c,\lambda_c)^+\left(\begin{array}{c} \nabla_\theta \mathbf{j}(\theta_c) \\ 1 \end{array}\right),$$
with $M_\mathrm{red}(\theta_c,\lambda_c)^+$ denoting the Moore-Penrose pseudo-inverse of $M_\mathrm{red}(\theta_c,\lambda_c)$. The ODE implementation has been suggested by Chen and Jennrich~\cite{ChenJen2002}. All methods were implemented in the Parameter EStimation TOolbox (PESTO) for MATLAB (\textit{https://github.com/ICB-DCM/PESTO}). The optimization based calculation exploits the MATLAB function \textit{fmincon} with a user supplied gradient of the objective function. The integration based calculation is implemented using the MATLAB function \textit{ode15s}, which is suited for ODEs and DAEs. This implementation is also included in PESTO.

The profile likelihood calculation using the aforementioned methods provided consistent results. Optimization based profile likelihood calculation, integration based profile likelihood calculation using the Hessian and integration based profile likelihood calculation using the FIM (with retraction factor $\gamma > 2$) indicate that for the given data set $\rho$, $s_1$ and $s_2$ are practically identifiable for a confidence level of 95\% while $D$, $\alpha$ and $\beta$ are practically non-identifiable (see Table~\ref{fig:App_Data} and Figure~\ref{fig:App_Prof}A). For $\gamma < 6$, integration based profile likelihood calculation using the FIM yielded an underestimation of the profile likelihood. This effect is worse for practically non-identifiable parameters than for practically identifiable parameters. However, for increasing values of $\gamma$ the profile likelihood converged to the profile likelihood obtained with the optimization based method (Figure~\ref{fig:App_Prof}A (right)). Integration based profile likelihood calculation using the Hessian provided accurate results independent of $\gamma$. Differences in the path $\theta_c$ resulting from the implementation of DAE or ODE were negligible.


The comparison of the computation time for the different methods revealed that integration based profile likelihood calculation using the Hessian was computationally more efficient than the other methods. For the practically identifiable parameter $\rho$ the implementation as ODE results in a speed-up by a factor of five compared to optimization based profile calculation with 0th order proposal (Figure~\ref{fig:App_Prof}A (left)). For the practically non-identifiable parameter $\alpha$ the efficiency improvement was a factor 144 compared to the optimization based methods with 0th order proposal and a factor five compared to optimization based methods with 1st order proposal. One reason for the improved computational efficiency was the adaptive choice of the evaluation points, which allowed for larger steps in regions with smaller curvature. This effect was reduced for the FIM, as the retraction increases the stiffness of the DAE or ODE. The decrease in the step sizes due to the stiffness yielded more function evaluations and an increased computation time. Surprisingly, this increase also outweighed the higher computation cost of computing second order sensitivities. Furthermore our analysis of the computation times demonstrated that for this problem the ODE implementation was computationally more efficient than the DAE implementation.

\begin{figure}
\centering
\includegraphics[width=\textwidth]{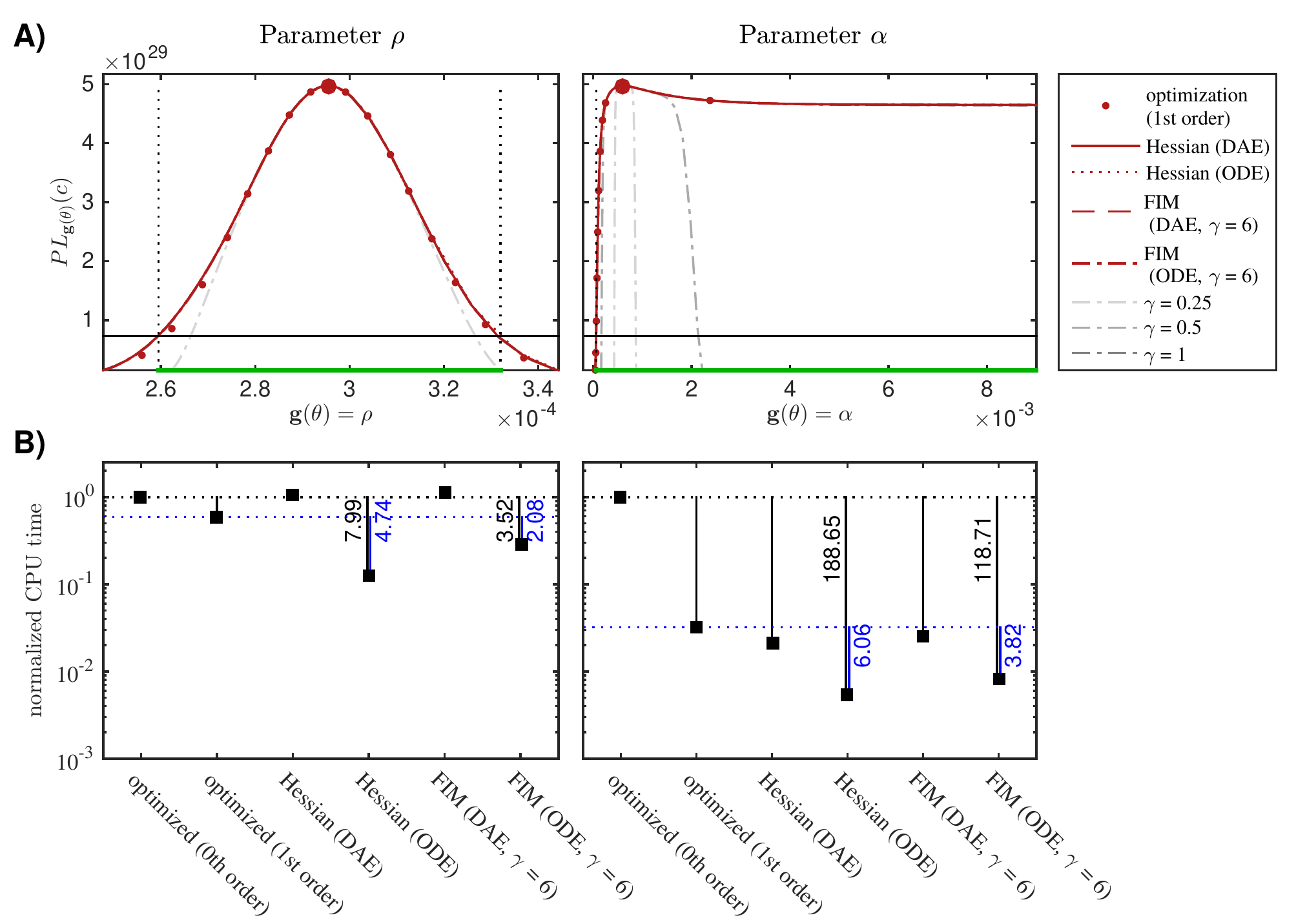}
\caption{\textbf{Profile likelihood calculation for individual parameters $\theta_i$.}
(A) Profile likelihood for $\rho$ and $\alpha$ calculated using the optimization based and integration based methods. For the practically identifiable parameter $\rho$ (left), almost all profiles agree perfectly. For the practically non-identifiable parameter $\alpha$ (right), integration based methods using the Fisher Information Matrix and $\gamma < 2$ underestimate the profile likelihood.
(B) Normalised CPU time for different profile likelihood calculation methods. The numbers indicate the speed up compared to the optimization based method with 0th order proposal (\textcolor{black}{\rule[.6mm]{0.25cm}{1pt}}) or the 1st order proposal (\textcolor{blue}{\rule[.6mm]{0.25cm}{1pt}}).}
\label{fig:App_Prof}
\end{figure}

Integration based profile calculation methods allow for the analysis of individual parameters $\mathbf{g}(\theta) = \theta_i$ but also for more complex expressions. We considered the parameter ratio $\mathbf{g}(\theta,u) = \frac{\alpha}{\beta}$. While the individual parameters possess an unbounded confidence interval and are practically non-identifiable, the ratio is practically identifiable and possesses a finite confidence interval (Figure~\ref{fig:App_ProfComb}A). This indicates that influx (related to $\beta$) and outflow (related to $\alpha$) are balanced. In addition, the analysis of the quotient of the molecule abundance at the tip compared to the abundance at $x=2$ for the time point $t = 100$, $\mathbf{G}(\theta,u) = u(t=100,x=2)/u(t=100,x=0)$, revealed that the steepness of teh gradient is well determined (Figure~\ref{fig:App_ProfComb}B).

In summary, the numerical evaluation for Pom1p signalling revealed the accuracy and efficiency of the integration based profile likelihood calculation methods. Beyond individual parameters, integration based methods facilitated uncertainty analysis for a range of scalar model properties.

\begin{figure}
\includegraphics[width=\textwidth]{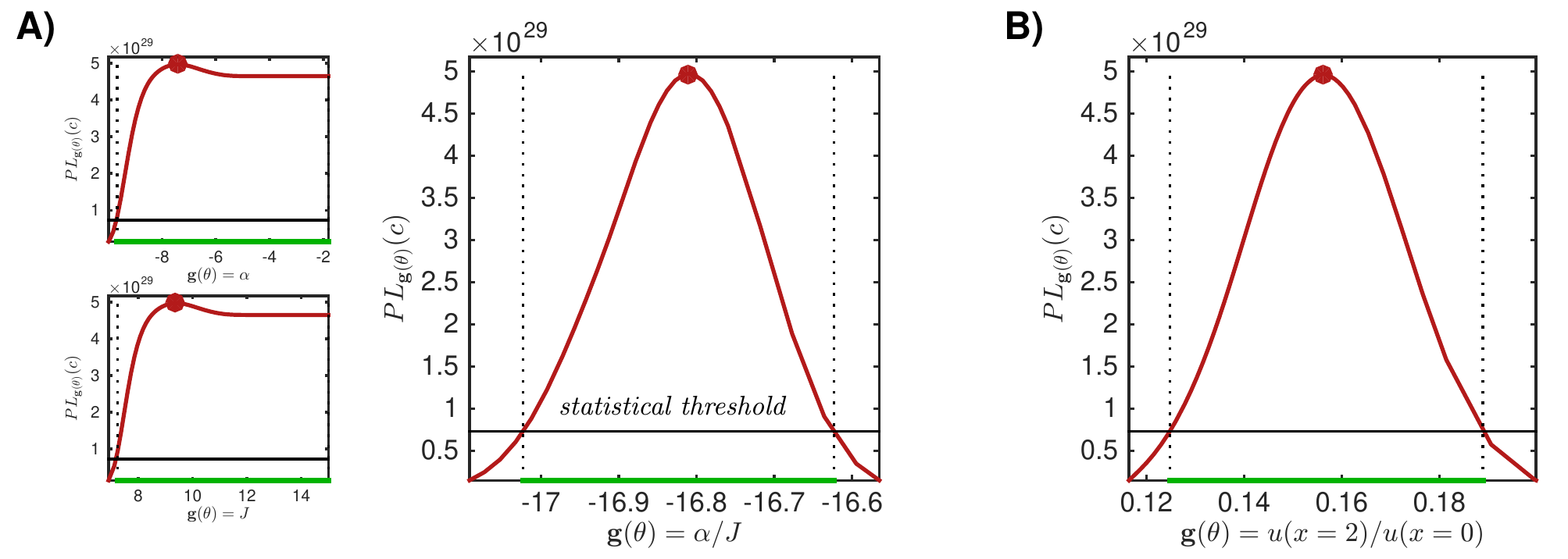}
\caption{\textbf{Profile likelihood calculation for parameter ratio and model prediction.} (A) Profile likelihoods (\textcolor{ProfRed}{\rule[.8mm]{0.25cm}{2pt}}) for $\alpha$ and $\beta$ revealing practical non-identifiability. Profile likelihood for the parameter combination $\mathbf{G}(\theta,u) = \frac{\alpha}{\beta}$ with a finite confidence interval $\mathrm{CI}_{0.95,\mathbf{g}(\theta)}$ (\textcolor{ConfGreen}{\rule[.8mm]{0.25cm}{2pt}}). (B) Profile likelihood and confidence interval for $\mathbf{G}(\theta,u) = u(t=100,x=2)/u(t=100,x=0)$. All profile likelihoods are computed using the ODE formulation of the integration based method with the Hessian.}
\label{fig:App_ProfComb}
\end{figure}

\section{Conclusion}
\label{sec:Conclusion}

In this paper we considered profile likelihood methods for uncertainty analysis in PDE constrained inverse problems. Profile likelihoods provide statistically interpretable confidence bounds for parameters and model predictions using maximum projections of the likelihood. We formulated optimization based profile likelihood calculation methods for the reduced and the full problem. As optimization based approaches can become computationally demanding for PDE constrained problems, we extended the results for the reduced problem by Chen and Jennrich~\cite{ChenJen2002} to PDEs. In addition, we formulated an integration based profile likelihood calculation method for the full problem and established equivalence with the reduced formulation.

The integration based methods provide the exact profile likelihood, if second order information, i.e. Hessian-vector products, are available. We introduced an approximation of the integration based approach for the full problem to circumvent the second order information. A bound for the approximation error was provided and convergence with respect to the retraction factor was established. 

Optimization and integration based profile likelihood calculation methods for the reduced problem were assessed for a semi-linear PDE model of gradient formation in fission yeast. For both, practically identifiable and practically non-identifiable parameters, integration based methods were 4- to 5-fold faster than state-of-the-art optimization based approaches. The precise speed-up depended on the specific implementation, using exact second order information improved accuracy as well as computational efficiency. We note that for the practically non-identifiable parameters of the model, uncertainty analysis methods based on local approximations fail to provide realistic confidence bounds~\cite{MurphyVaa2000}.

The implementation used for the comparison exploited forward methods for the calculation of gradient and Hessian of the objective function. We expect a further improvement of the computational efficiency by using adjoint methods. In addition, the implementation of integration based profile likelihood calculation for the full formulation promises an improved computational efficiency. The simulation of the PDE model -- currently performed in each solver step -- would be circumvented and instead the coupled ODE-PDE system~\req{eq:PL_diff_unred} would be simulated. In addition, the implementation could be extended to the profile likelihood analysis of time-dependent properties (see~\cite{HassKre2016}).

In summary, this study presented optimization and integration based profile likelihood calculation methods for PDE constrained problems. Besides exact methods, we present approximation and corresponding error bounds. The methods for the reduced formulation are  implemented in the open-source MATLAB toolbox PESTO, which will facilitate the application of the method and simplify the development of extensions. This is particularly interesting as the methods we developed can be easily transferred to a broad class of PDE models. Accordingly, this study can help to improve uncertainty analysis in a number of scientific fields.

\section*{Appendix}
Proof of Proposition \ref{prop:retr_full}.\\
Assumptions \req{Wsymm} and \req{Wpos} imply that $(\xi,\zeta)_{W_c}=\dup{W_c \xi}{\zeta}$ defines an inner product on the space $X$.

For any $c\in[c_0,c_1]$, we abbreviate $\tilde{g}_c=W_c^{-1}\nabla_{\xi}\mathbf{G}(\hat{\xi}_c)\in X$ and $\tilde{l}_c=W_c^{-1}\nabla_{\xi}\mathcal{L}(\hat{\xi}_c)\in X$, and define 
the mapping $P_c:X\to X$ by 
\[
P_c\zeta=\frac{\dup{\nabla_{\xi}\mathbf{G}(\hat{\xi}_c)}{\zeta}}{\dup{\nabla_{\xi}\mathbf{G}(\hat{\xi}_c)}{\tilde{g}_c}}\tilde{g}_c
\] 
Note that $P_c$ is linear and idempotent, i.e., a projection onto the one-dimensional linear space $\mbox{span}(\tilde{g}_c)$, actually the orthogonal projection with respect to the inner product $(\cdot,\cdot)_{W_c}$. The second line in \req{eq:PL_diff_unred_retr_x} yields 
\[
P\dot{\hat{\xi}_c}=\frac{1}{\dup{\nabla_{\xi}\mathbf{G}(\hat{\xi}_c)}{\tilde{g}_c}}\tilde{g}_c
\]
and the first line in \req{eq:PL_diff_unred_retr_x} (after application of $W_c^{-1}$) together with the fact that $(I-P_c) \tilde{g}_c=0$ yields
\[
(I-P)\dot{\hat{\xi}_c}=-\gamma(I-P_c)\tilde{l}_c
\]
hence altogether we have eliminated $\hat{\lambda}_c$ and end up with the identity  
\[
\dot{\hat{\xi}}_c=\frac{1}{\dup{\nabla_{\xi}\mathbf{G}(\hat{\xi}_c)}{\tilde{g}_c}}\tilde{g}_c-\gamma(I-P_c)\tilde{l}_c\,.
\]
i.e., the following evolution equation for the error:
\begin{equation}\label{dothatxc}
\dot{\hat{\xi}}_c-\dot{\xi}_c=\frac{1}{\dup{\nabla_{\xi}\mathbf{G}(\hat{\xi}_c)}{\tilde{g}_c}}\tilde{g}_c-\dot{\xi}_c-\gamma(I-P_c)\tilde{l}_c\,.
\end{equation}
Here, the last term is responsible for retraction. Indeed, using symmetry \req{Wsymm} and the definition of $P_c$, which yields, for any $\xi,\zeta\in X$
\[
\dup{W_c\xi}{(I-P_c)\zeta}=\dup{W_c\zeta}{(I-P_c)\xi}
\] 
as well as 
\begin{equation}\label{ImPx}
\dup{W_c\tilde{g}_c}{(I-P_c)\xi}=\dup{W_c \xi}{(I-P_c)\tilde{g}_c}=\dup{W_c \xi}{0}=0
\end{equation}
we get
\[
\eqalign{
\dup{W_c(\hat{\xi}_c-\xi_c)}{(I-P_c)\tilde{l}_c}
= \dup{W_c\tilde{l}_c}{(I-P_c)(\hat{\xi}_c-\xi_c)}\\
= \dup{W_c(\tilde{l}_c+\lambda_c\tilde{g}_c)}{(I-P_c)(\hat{\xi}_c-\xi_c)}\\
= \dup{\nabla_{\xi}\mathcal{L}(\hat{\xi}_c)+\lambda_c\nabla_{\xi}\mathbf{G}(\hat{\xi}_c)}{(I-P_c)(\hat{\xi}_c-\xi_c)}
}
\]
Since $\nabla_{\xi}\mathcal{L}(\xi_c)+\lambda_c\nabla_{\xi}\mathbf{G}(\xi_c)=0$, 
we have 
\[\eqalign{
\dup{\nabla_{\xi}\mathcal{L}(\hat{\xi}_c)+\lambda_c\nabla_{\xi}\mathbf{G}(\hat{\xi}_c)}{(I-P_c)(\hat{\xi}_c-\xi_c)}\\
=
\dup{(\nabla_{\xi}^2\mathcal{L}(\xi_c)+\lambda_c\nabla_{\xi}^2\mathbf{G}(\xi_c))(\hat{\xi}_c-\xi_c)}{(I-P_c)(\hat{\xi}_c-\xi_c)}\\
\quad+ \dup{\mbox{tay}}{(I-P_c)(\hat{\xi}_c-\xi_c)}
}\]
for
\[
\mbox{tay}=\nabla_{\xi}\Phi(\hat{\xi}_c)-\nabla_{\xi}\Phi(\xi_c)
-\nabla_{\xi}^2\Phi(\xi_c)(\hat{\xi}_c-\xi_c)
=o(\|\hat{\xi}_c-\xi_c\|_X)
\]
where $\Phi(\xi)=\mathcal{L}(\xi)+\lambda_c\mathbf{G}(\xi)$.
Moreover,
$(I-P_c)(\hat{\xi}_c-\xi_c)\in \nabla_{\xi} \mathbf{G}(\xi_c)_\bot$ (cf. \req{ImPx}), and by $\dup{\nabla_{\xi}\mathbf{G}(\hat{\xi}_c)}{\hat{\xi}_c-\xi_c}\approx\mathbf{G}(\hat{\xi}_c)-\mathbf{G}(\xi_c)=c-c=0$, we have 
\[
\|P_c(\hat{\xi}_c-\xi_c)\|_X
=\frac{|\dup{\nabla_{\xi}\mathbf{G}(\hat{\xi}_c)}{\hat{\xi}_c-\xi_c}|}{|\dup{\nabla_{\xi}\mathbf{G}(\hat{\xi}_c)}{\tilde{g}_c}|} 
\|\tilde{g}_c\|_X
\leq \bar{M}_c  \, \|\hat{\xi}_c-\xi_c\|_X^2
\]
for $\bar{M}_c=\frac{\sup_{\xi\in[\xi_c,\hat{\xi}_c]} \|\nabla_{\xi}^2\mathbf{G}(\xi)\|}{2\gamma_W\|\tilde{g}_c\|_X}$.
Thus by \req{ssc}we can further estimate
\[
\eqalign{
\dup{W_c(\hat{\xi}_c-\xi_c)}{(I-P_c)\tilde{l}_c}
\geq\gamma_L\|(I-P_c)(\hat{\xi}_c-\xi_c)\|_X^2 -\gamma_L\bar{M}_c^2  \, \|\hat{\xi}_c-\xi_c\|_X^4\\
\quad+ \dup{\mbox{tay}}{(I-P_c)(\hat{\xi}_c-\xi_c)}\\
\quad-\|\nabla_{\xi}^2\mathcal{L}(\xi_c)+\lambda_c\nabla_{\xi}^2\mathbf{G}(\xi_c)\| \, \bar{M}_c  \, \|\hat{\xi}_c-\xi_c\|_X^2 \|(I-P_c)(\hat{\xi}_c-\xi_c)\|_X\\
\geq\frac{\gamma_L}{2}\|\hat{\xi}_c-\xi_c\|_X^2\\
\quad+ \dup{\mbox{tay}}{(I-P_c)(\hat{\xi}_c-\xi_c)}\\
\quad-\|\nabla_{\xi}^2\mathcal{L}(\xi_c)+\lambda_c\nabla_{\xi}^2\mathbf{G}(\xi_c)\| \, \bar{M}_c  \, \|\hat{\xi}_c-\xi_c\|_X^2 \|(I-P_c)(\hat{\xi}_c-\xi_c)\|_X
}
\]
Thus applying $W_c(\hat{\xi}_c-\xi_c)$ to $\dot{\hat{\xi}}_c-\dot{\xi}_c$ and using the identity 
\[
\eqalign{\dup{W_c(\hat{\xi}_c-\xi_c)}{\dot{\hat{\xi}}_c-\dot{\xi}_c}\\
=\frac12 \frac{d}{dc} \dup{W_c(\hat{\xi}_c-\xi_c)}{\hat{\xi}_c-\xi_c} 
-\frac12 \dup{\dot{W}_c(\hat{\xi}_c-\xi_c)}{\hat{\xi}_c-\xi_c} 
}
\]
that follows from symmetry \req{Wsymm},
we get from \req{dothatxc}
\[
\hspace*{-2.5cm}
\eqalign{
\frac12 \frac{d}{dc} \dup{W_c(\hat{\xi}_c-\xi_c)}{\hat{\xi}_c-\xi_c}\\
\leq-\gamma\frac{\gamma_L}{2}\|\hat{\xi}_c-\xi_c\|_X^2 +\gamma\gamma_L\bar{M}_c^2  \, \|\hat{\xi}_c-\xi_c\|_X^4\\
\quad- \gamma\dup{\mbox{tay}}{(I-P_c)(\hat{\xi}_c-\xi_c)}\\
\quad+\gamma\|\nabla_{\xi}^2\mathcal{L}(\xi_c)+\lambda_c\nabla_{\xi}^2\mathbf{G}(\xi_c)\| \, \bar{M}_c  \, \|\hat{\xi}_c-\xi_c\|_X^2 \|(I-P_c)(\hat{\xi}_c-\xi_c)\|_X\\
\quad + \frac12 \|\dot{W}_c\| \, \|\hat{\xi}_c-\xi_c\|_X^2 
+\left(\frac{1}{\|\tilde{g}_c\|_X}+\sqrt{\dup{W_c\dot{\xi}_c}{\dot{\xi}_c}} \right) 
\sqrt{\dup{W_c(\hat{\xi}_c-\xi_c)}{\hat{\xi}_c-\xi_c}}\,,
}
\]
where by Young's inequality, the last term can be bounded by 
\[\frac{\tilde{\epsilon}}{2}
+\frac{1}{2\tilde{\epsilon}} \left(\frac{1}{\|\tilde{g}_c\|_X}+\sqrt{\dup{W_c\dot{\xi}_c}{\dot{\xi}_c}} \right)^2
\dup{W_c(\hat{\xi}_c-\xi_c)}{\hat{\xi}_c-\xi_c}\,,
\]
where $\tilde{\epsilon}>0$ can still be chosen.
Thus using \req{Wpos}, we end up with an estimate of the form
\begin{equation}\label{estedot}
\dot{e}_c\leq \tilde{\epsilon}-\left(\gamma m-\gamma f(e_c)-M_c^{\tilde{\epsilon}} \right) e_c 
\end{equation}
for $e_c=\dup{W_c(\hat{\xi}_c-\xi_c)}{\hat{\xi}_c-\xi_c}$, where 
$m=\frac{\gamma_L}{\bar{M}_W}$, $M_c^{\tilde{\epsilon}}=\frac{\|\dot{W}_c\|}{\gamma_W}+\frac{1}{\tilde{\epsilon}}\left(\frac{1}{\|\tilde{g}_c\|_X}+\sqrt{\dup{W_c\dot{\xi}_c}{\dot{\xi}_c}}\right)^2$, and 
$f(t)=o(t)$ as $t\to0$ and wlog $f$ is monotonically decreasing.\\
So there exists $\rho>0$ such that $f(\rho)<\frac{m}{2}$.
We impose the initial smallness $e_{c_0}<\rho$ and, for any $\tilde{\epsilon}\in]0,\frac{\rho-e_{c_0}}{c_1-c_0}[$, choose $\gamma\geq \frac{2}{m} M_c^{\tilde{\epsilon}}$. With this choice we have, first of all, that $e_c\leq \rho$ for all $c\in[c_0,c_1]$, which can be seen as follows: Assume, on the contrary, that for some $c\in[c_0,c_1]$, $e_c>\rho$ holds and define $c_2$ to be the smallest such $c$, $c_2=\inf\{c\in[c_0,c_1]\, : \ e_c> \rho\}$. Then by the initial smallness condition, $c_2$ must be strictly larger than $c_0$, by minimality of $c_2$ we have $e_c\leq \rho$ for all $c\in[c_0,c_2]$, and finally, by the sequential defintion of the infimum we get $e_{c_2}\geq\rho$. Integration of \req{estedot} therefore by the choice of $\rho$ and $\gamma$ as well as the initial smallness condition yields
\[
\eqalign{
e_{c_2}\leq e_{c_0}+\tilde{\epsilon}(c_2-c_0)-\int_{c_0}^{c_2}\left(\gamma m-\gamma f(e_c)-M_c^{\tilde{\epsilon}} \right) e_c \, dc\\
\qquad \leq 
e_{c_0}+\tilde{\epsilon}(c_2-c_0)-\int_{c_0}^{c_2}\left(\gamma \frac{m}{2}-M_c^{\tilde{\epsilon}} \right) e_c \, dc\\
\qquad \leq 
e_{c_0}+\tilde{\epsilon}(c_2-c_0)<\rho\,,
}
\]
which contradicts $e_{c_2}\geq\rho$. Thus we have shown the boundedness estimate in \req{bdexpdecay}, which additionally implies that $f(e_c)\leq\frac{m}{2}$ for all $c\in[c_0,c_2]$ and hence 
\begin{equation}\label{estedot1}
\dot{e}_c\leq \tilde{\epsilon}-\left(\gamma \frac{m}{2}-M_c^{\tilde{\epsilon}} \right) e_c 
\end{equation}

To prove the exponential decay estimate in  \req{bdexpdecay} for given $\kappa>0$, $\tilde{\epsilon}\in]0,\frac{\rho-e_{c_0}}{c_1-c_0}[$, we choose $\lambda$ possibly larger, namely $\lambda\geq \frac{2}{m} (\kappa+M_c^{\tilde{\epsilon}})$ to obtain from \req{estedot1}
\begin{equation}\label{estedot2}
\dot{e}_c\leq \tilde{\epsilon}-\kappa e_c 
\end{equation}
and Gronwall's inequality, applied to $e_c-\frac{\tilde{\epsilon}}{\kappa}$, that
\[
e_c-\frac{\tilde{\epsilon}}{\kappa}\leq \left(e_{c_0}-\frac{\tilde{\epsilon}}{\kappa}\right)\exp(-\kappa(c-c_0))\,. 
\]

Finally, \req{conv} follows by choosing $\tilde{\epsilon}\leq\frac{\varepsilon}{2}$, $\kappa\geq\max\left\{1, \frac{\ln(2e_{c_0})-\ln(\varepsilon)}{c-c_0}\right\}$, $\lambda\geq \frac{2}{m} (\kappa+M_c^{\tilde{\epsilon}})$.

\section*{References}

\end{document}